\documentclass{article}
\usepackage{graphicx} 

\usepackage[a4paper, margin=2cm]{geometry}
\usepackage{amsmath,amssymb}
\usepackage{hyperref}
\usepackage{enumitem}
\usepackage{martonsdefs}
\usepackage{martonsthms}
\usepackage{pgfplots}
\pgfplotsset{compat=1.18}
\usepackage{caption}
\usepackage{subcaption}
\usepackage{tikz}
\usetikzlibrary{arrows.meta,calc}
\usetikzlibrary{intersections}
\usetikzlibrary{patterns}
\usepackage{array,booktabs}
\usepackage{float} 
\usepackage[dvipsnames]{xcolor} 
\definecolor{vmaxcolor}{RGB}{0,100,0} 
\newcolumntype{M}[1]{>{\arraybackslash}m{#1}} 
\usepackage{bbm}
\usepackage{graphicx}
\usepackage{mathtools}
\usepackage{tabularx}
\usepackage{verbatim}
\usepackage[T1]{fontenc}
\usepackage[outline]{contour}

\newbool{pics}

\setbool{pics}{true}

\ifbool{pics}{\newenvironment{condpics}{}{}}{\newenvironment{condpics}{\comment}{\endcomment}}

\newcommand*{\vmax}{v_\text{max}}
\newcommand*{\vel}{v_{\text{E--L}}}

\makeatletter
\newcommand{\hathat}[1]{%
\begingroup%
  \let\macc@kerna\z@%
  \let\macc@kernb\z@%
  \let\macc@nucleus\@empty%
  \hat{\raisebox{.2ex}{\vphantom{\ensuremath{#1}}}\smash{\hat{#1}}}%
\endgroup%
}
\makeatother

\title{The fastest way through a traffic light}
\author{M\'arton Bal\'azs\footnotemark[1], Edward Crane\footnotemark[1], and Alexander Tallis\footnotemark[1]\thanks{University of Bristol}}
\date{\today}

\begin{document}

\maketitle

\begin{abstract}
We give a rigorous solution of an optimisation problem of minimising the expected delay caused by encountering a red traffic light on a road journey. The problem incorporates simple constraints on maximum speed, acceleration and braking rates, and depends on the assumed distribution of the remaining time until the traffic light will turn green, after it is first noticed. We assume that this distribution has a bounded and non-increasing density, which is natural since this holds for the law of the excess time in any stationary renewal process. In two special cases, where this distribution is either Uniform or Exponential, we give a complete characterisation of all possible combinations of phases of maximum acceleration, maximum speed, maximum braking, following an Euler--Lagrange curve, and standing stationary at the traffic light, which can make up an optimal solution. The key technique is to write the problem in terms of a two-dimensional pressure integral, so that the problem becomes analogous to filling a tank with a given quantity of liquid. 
\end{abstract}

\noindent\textbf{Keywords:} 
random traffic lights, Lipschitz control, constrained calculus of variations.
\medskip

\noindent
\textbf{MSC2020 subject classification:} 49K30, 49K45.

\section{Introduction}

\subsection{The problem}

Imagine that you drive around a corner and spot a red traffic light in front of you, a positive distance $d$ away. At this moment your velocity is $v_0$.  You are very law-abiding, so you will always avoid driving through a red traffic light. Your car has a maximum acceleration rate $\alpha$, which does not depend on your speed. It has a maximum braking rate $\beta$, which also does not depend on your speed.
Fortunately for you, $d \ge v_0^2/(2\beta)$, so if you brake hard immediately away you can avoid passing the traffic signal on red. You are in a hurry and wish to arrive at your destination as early as possible.  The street has a speed limit $\vmax$ which you will of course avoid exceeding, being law-abiding. No other vehicles are between you and the traffic light, but there is another driver close behind you, so you cannot choose to reverse without causing a road-rage incident. That is to say, $v_\text{min} = 0$. Your destination is a reasonably large distance $L$ beyond the traffic light, and there are no further traffic lights between the one traffic light that you can see and your destination. We assume that $L \ge \vmax^2/(2\alpha)$, so that even if you have to stop at the traffic light you will have time to accelerate all the way to the speed limit $\vmax$ before reaching your destination. (To keep things simple, suppose that it is acceptable to be travelling at speed $\vmax$ at the moment you reach your destination, so that it functions like the finish line in a race.) We also assume that once the light turns green it is guaranteed to stay green for long enough that you will certainly be able to pass it during the green phase, by accelerating at rate $\alpha$ and then moving with velocity $\vmax$. (As far as our optimisation problem below is concerned, this is the same as assuming it will stay green forever once it has turned green.)
\begin{quote}How should you accelerate or brake, to minimise the expectation of your arrival time, without taking any risk of breaking the law? 
\end{quote}
The answer depends on your belief about the distribution of the random time $T$ remaining until the traffic light turns green. Let us look at a few specific cases and a more general case.

\begin{enumerate}
 \item The traffic light has a countdown display next to it which indicates $T$. This case is simple to solve and we will not consider it further, but note that optimal solutions exist that can be decomposed into phases where either the velocity is $0$ or $\vmax$, or the acceleration is $\alpha$ or $-\beta$. We will see that in a much more general case the optimal solutions may also be decomposed into finitely many phases of these four types and one further type, an \emph{Euler--Lagrange curve}. 
 \item You are sure that the light will turn green before a finite time $q >0$, and even if you travel as fast as allowed by the constraints you cannot pass the traffic light before time $q$. This case is also trivial, so for the rest of the paper we exclude it by assuming  
 \begin{equation}
 \begin{cases}
 q \le (\vmax - v_0)/\alpha \;\text{ and }\;  d < q(v_0+q\alpha/2), \text{ or}
\\
  q \ge (\vmax - v_0)/\alpha \;\text{ and }\; d <  (\vmax -v_{0})(\vmax+v_0)/2\alpha + \vmax(q - (\vmax - v_0)/\alpha). 
 \end{cases}
\label{eq:trafficlightisnear}
 \end{equation}
\item\label{it:uni} You believe that the time until the light turns green is a random variable with a uniform distribution, say $U([0,q])$.  This is consistent with a traffic light that always stays red for exactly $q$ seconds, when we don't know how much of the red phase has elapsed when we first observe it. 
\item\label{it:exp} You believe that the traffic light will turn green after an exponentially distributed time with mean $1/\lambda$. In this case there is no upper bound on $T$ so we take $q = \infty$. 
\item\label{it:renewal} You believe that the lengths of the periods when the light is red are i.i.d.~random variables with a distribution that you know, but you don't know how long the light has already been red when you first observe it. In this case it is reasonable to model the remaining time until the light turns green as a random variable distributed like the \emph{excess time} (or residual time) until the next renewal epoch in a  stationary renewal process. Cases \ref{it:uni} and \ref{it:exp} are both special cases of this one.
\end{enumerate}

In this paper we prove the existence of a unique optimal trajectory in case \ref{it:renewal}. We also work out the exact optimal trajectory in cases \ref{it:uni} and \ref{it:exp}.

\subsection{The solution}

We first give a brief, informal description of the solution for cases \ref{it:uni} and \ref{it:exp}. The optimal solution is a velocity profile $v(t)$, which is best described as a 2-dimensional tank-and-water picture. These are illustrated in Figures \ref{fig:uniformpressure} for the Uniform case and \ref{fig:exppressure} for the Exponential case. The tank has the following boundaries:
\begin{itemize}
 \item On the left, below $v(0)=v_0$, a line of slope $-\be$ until velocity 0. This is slamming the brakes right at the start.
 \item On the bottom, the line $v=0$, as we never reverse.
 \item When $q$ is finite, there is a boundary on the right, along the line $t=q$, since the optimal profile is only valid until the light goes green at time $T$, which never exceeds $q$.
 \item On the left, above $v(0)=v_0$, a line of slope $\al$, as this is the maximum acceleration.
 \item On top, the line $v = \vmax$, as this is the maximum permitted speed.
\end{itemize}

We will fill this tank up with water. The total volume of the water, $\int_0^qv(s)\,\text ds$, is the distance travelled up to the light, which is $d$. The catch is that the level lines of the water are not necessarily horizontal; we will derive their exact shape later in \eqref{eq:velexpression}. Just imagine the whole tank placed in a funny gravitational field.
\begin{itemize}
 \item For the Uniform case, the level lines are straight lines of slope $-\al$. (In this case, the gravitational field is the one we used to and the tank is tilted, as shown in Figure~\ref{fig:uniformpressure}.)
 \item For the Exponential case, the level lines are decreasing exponential curves getting steeper in time. Under some parameter values the slope can exceed $-\be$, which violates the constraints. In this case, the exponential level lines suddenly switch to a straight line of slope $-\be$ at a particular velocity value \(v_c\). That is, at this velocity we suddenly slam the brakes at full force. Notice that the switch is not smooth: there is a discontinuity in the deceleration when it happens.
\end{itemize}

To get the optimal trajectory $v(t)$, fill this tank up with volume $d$ of water, and follow the upper boundary of the liquid surface. The curves are marked with blue dash-dotted arrows.

\begin{condpics}
\begin{figure}[ht]
  \centering
  \begin{tikzpicture}[rotate=34.3]
    \begin{axis}[
      axis lines=left,
      xmin=0, xmax=2.2,
      ymin=0, ymax=1.4,
      ticks=none,
      width=0.8\linewidth,
      height=0.4\linewidth,
      xlabel={$t$},
      ylabel={$v$},
      xlabel style={
        at={(axis description cs:0.5,-0.08)},
        rotate=-35
      },
      ylabel style={
        at={(axis description cs:-0.04,0.5)},
        rotate=-125
      },
      clip=false
    ]

      \node[below, rotate=-35] at (axis cs:0.30,0) {$t_1$};
      \node[below, rotate=-35] at (axis cs:0.60,0) {$t_2$};  
      \node[below, rotate=-35] at (axis cs:0.70,0) {$t_3$};  
      \node[below, rotate=-35] at (axis cs:2.00,0) {$t_4$};  
      \node[below, rotate=-35] at (axis cs:2.10,0) {$q$};
      \node[left,  rotate=-35] at (axis cs:0,1.30) {$\vmax$};
      \node[left,  rotate=-35] at (axis cs:0,0.60) {$v_0$};

\begin{scope}
  \path[clip]
    (axis cs:0,0.60) --
    (axis cs:0.70,1.30) --
    (axis cs:2.10,1.30) --
    (axis cs:2.10,0) --
    (axis cs:0.3,0) --
    (axis cs:0,0.6) -- cycle;

  \pgfplotsinvokeforeach{-0.80,-0.60,-0.40,-0.20,0,0.20,0.40,0.60,0.80,1.00,1.20,1.40,1.60,1.80,2.00}{%
    \addplot[
      green!70!black,
      line width=0.4pt,
      domain=0:2.2,
      samples=2
    ] {1.3 - x + #1};
  }%
\end{scope}

      \draw[dotted]          (axis cs:0.30,0) -- (axis cs:0.30,1.4);
      \draw[dotted]          (axis cs:0.60,0) -- (axis cs:0.60,1.4);
      \draw[dotted]          (axis cs:0.70,0) -- (axis cs:0.70,1.4);
      \draw[dotted]          (axis cs:2.00,0) -- (axis cs:2.00,1.4);
      \draw[densely dotted]  (axis cs:0,1.3)  -- (axis cs:2.1,1.3);
      \draw[densely dotted]  (axis cs:2.1,0)  -- (axis cs:2.1,1.3);

      \addplot+[no marks, red, solid, line width=0.9pt, samples=2, domain=0:0.30] {-2*x + 0.60}node[midway,sloped,below]{\(\dot v\ge-\be\)};

      \addplot+[no marks, red, solid, line width=0.9pt, samples=2, domain=0:0.70] {x + 0.60}node[midway,sloped,above]{\(\dot v\le\al\)};

      \draw[red,solid,line width=0.9pt](0.7,1.3)--(2.1,1.3)node[midway,above]{\(v\le\vmax\)}--(2.1,0)node[midway,right]{\(t\le q\)}--(0.3,0)node[midway,below]{\(v\ge0\)};

      \addplot+[no marks, red, dotted, line width=0.9pt, samples=2, domain=0.70:2.00] {-x + 2.00};

      \addplot+[no marks, red, dotted, line width=0.9pt, samples=2, domain=0:0.60] {-x + 0.60};

      \draw[blue,line width=0.9pt,->,densely dashdotted](0.01,0.60)--(0.30,0.02)--(2.1,0.02);
      \draw[blue,line width=0.9pt,densely dashdotted](0.15,0.32)--(0.45,0.02);
      \draw[blue,line width=0.9pt,->,densely dashdotted](0.01,0.59)--(0.70,1.28)--(2.1,1.28);
      \draw[blue,line width=0.9pt,densely dashdotted](0.31,0.89)--(1.18,0.02);
      \draw[blue,line width=0.9pt,densely dashdotted](0.8,1.28)--(2.06,0.02);
      \draw[blue,line width=0.9pt,->,densely dashdotted](1.5,1.28)--(2.1,0.68);

    \end{axis}
  \end{tikzpicture}

  \caption{A velocity–time diagram for the Uniform case showing $t_1 \le t_2 < t_3 < t_4 \le q$ for a particular \(v_0\). The green lines denote $\dot v=-\alpha$ isobars. The red boundaries of the tank represent the constraints, while the blue dash-dotted lines illustrate potential optimal trajectories with various volumes \(d\) of water.}
  \label{fig:uniformpressure}
\end{figure}
\end{condpics}

\begin{condpics}
\begin{figure}[ht]
    \centering
    \begin{tikzpicture}
      \pgfmathsetmacro{\vmaxnum}{200}
      \pgfmathsetmacro{\vzero}{160}
      \pgfmathsetmacro{\alpha}{8}
      \pgfmathsetmacro{\lam}{0.07}
      \pgfmathsetmacro{\beta}{20}
      \pgfmathsetmacro{\vc}{42.4}
      \pgfmathsetmacro{\A}{314.29}  

      \pgfmathsetmacro{\taccend}{(\vmaxnum - \vzero)/\alpha}      
      \pgfmathsetmacro{\tzero}{\taccend + 8}                      
      \pgfmathsetmacro{\tc}{\tzero - (1/\lam)*ln((\A - \vmaxnum)/(\A - \vc))}  
      \pgfmathsetmacro{\tend}{\tc + \vc/\beta}                    
      \pgfmathsetmacro{\tblueend}{\vzero/\beta}                   
      \pgfmathsetmacro{\tlin}{\vc/\beta}                          

      \pgfmathsetmacro{\xmax}{ceil(\tend + 15)}
      \pgfmathsetmacro{\xq}{\tend + 12}

      \begin{axis}[
        width=13cm, height=7cm,
        xmin=0, xmax=\xmax,
        ymin=0, ymax=220,
        axis lines=left,
	x label style={at={(axis description cs:0.9,-0.1)},anchor=south},
	y label style={at={(axis description cs:-0.05,0.5)},anchor=west},
        xlabel={$t$}, ylabel={$v$},
        xtick=\empty,
        ytick={0,\vc,\vzero,\vmaxnum},
        yticklabels={$0$,{}, {}, {}},
        clip=false
      ]

        \node[anchor=east] at (axis cs:0,\vzero) {$v_0$};

        \addplot[dotted] coordinates {(\tc,0) (\tc,220)};
        \node[anchor=north] at (axis cs:\tc,0) {$t_c$};


	\addplot[red,line width=0.9pt] coordinates {(\xmax,\vmaxnum)(\taccend,\vmaxnum)(0,\vzero)(\tblueend,0)(\xmax,0)};

        \addplot[blue,line width=0.9pt,densely dashdotted] coordinates {(0,\vzero-1) (\taccend,\vmaxnum-1)};              
        \addplot[blue,line width=0.9pt,->,densely dashdotted, domain=\taccend:\tzero] {\vmaxnum-1};                        
	\addplot[blue,line width=0.9pt,->,densely dashdotted, domain=\tzero:\tc] {\A - (\A - \vc) * exp(-\lam*(\tc - x))-1}; 
        \addplot[blue,line width=0.9pt,->,densely dashdotted, domain=\tc:\tend] {\vc - \beta*(x - \tc)-1};                 

	\addplot[blue,line width=0.9pt,->,densely dashdotted, domain=\tzero-12:\tc-12] {\A - (\A - \vc) * exp(-\lam*(\tc - x-12))-1}; 
        \addplot[blue,line width=0.9pt,->,densely dashdotted, domain=\tc-12:\tend-12] {\vc - \beta*(x - \tc+12)-1};                 

	\addplot[blue,line width=0.9pt,->,densely dashdotted, domain=1:\tc-18] {\A - (\A - \vc) * exp(-\lam*(\tc - x-18))-1}; 
        \addplot[blue,line width=0.9pt,->,densely dashdotted, domain=\tc-18:\tend-18] {\vc - \beta*(x - \tc+18)-1};                 

	\addplot[blue,line width=0.9pt,densely dashdotted]coordinates{(0.2,\vzero)(\tblueend+0.2,0)};
	\addplot[blue,line width=0.9pt,->,densely dashdotted]coordinates{(\tblueend+0.2,1)(\xmax-1,1)};


        \foreach \s in {-18,-15,-12,-9,-6,-3,0,3,6,9,12,15}{
          \addplot[green!70!black, line width=0.4pt,
                   restrict x to domain*={0:\xmax},
                   domain={\tzero+(\s)}:{\tc+(\s)}]
            {\A - (\A - \vc) * exp(-\lam*(\tc - (x - \s)))};

          \addplot[green!70!black, line width=0.4pt,
                   restrict x to domain*={0:\xmax},
                   domain={\tc+(\s)}:{\tc+(\s)+\tlin}]
            {\vc - \beta*(x - (\tc+(\s)))};
        }

        \addplot[draw=none, fill=white] coordinates {(0,\vzero) (\taccend,\vmaxnum) (0,\vmaxnum)} -- cycle;
        \addplot[draw=none, fill=white] coordinates {(0,\vzero) (\tblueend,0) (0,0)} -- cycle;

        \addplot[red, thick] coordinates {(0,\vzero) (\taccend,\vmaxnum)};
        \addplot[dotted, domain=0:\xmax] {\vmaxnum};
        \node[anchor=east] at (axis cs:0,\vmaxnum) {$\vmax$};
	\draw(0,0)--(0,\vmaxnum);
        \node[anchor=east] at (axis cs:0,\vc) {$v_c$};
        \addplot[dotted, domain=0:\xmax] {\vc};

      \end{axis}
    \end{tikzpicture}
    \caption{A velocity–time diagram for the Exponential case. The red boundaries of the tank are as in the Uniform case. The green curves denote the Euler--Lagrange isobars followed by $\beta$ segments. The blue dash-dotted lines illustrate potential optimal trajectories with various volumes \(d\) of water. Note the change from Euler--Lagrange to $\beta$ occurs at velocity $v_c$, which does not depend on $d$. In general, the Euler--Lagrange curve is not tangent to the $-\beta$ segment where they meet.}\label{fig:exppressure}
\end{figure}
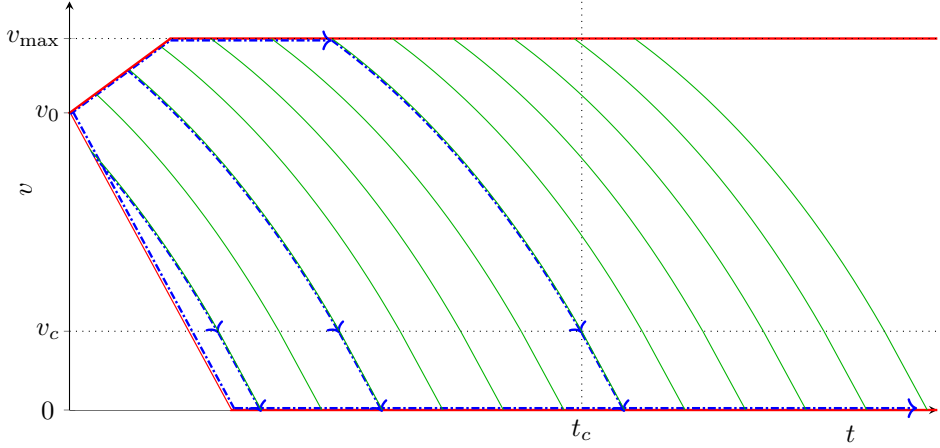
\end{condpics}

We rigorously prove all elements of this picture. We start by defining the problem carefully in the language of action integrals in Section \ref{Action}, and proving the existence of an optimiser in Section \ref{sc:exists}. Following a well-placed integration by parts in Section \ref{sc:findit}, we introduce the water tank analogy in Section \ref{pressureinterpretation}. This allows us to prove that the segments of the optimal trajectory must be as seen in the water tank pictures (Theorem \ref{thm:proofofcombinatorics}). The last two sections work out the specific details for the Uniform and Exponential examples. In particular, Theorem \ref{tm:vc} concerns the fairly involved arguments to find the phase transition in the Exponential case from the isobar Euler--Lagrange curves to full \(\be\) deceleration at the critical velocity \(v_c\). The value of \(v_c\) is also determined.

\subsection{Why our model is not fully realistic}
In this paper we ignore your reaction times as a driver. We also ignore the discomfort that you may cause to yourself or your passengers when you adjust your acceleration discontinuously.  A more realistic analysis than the one in this paper would place limits on the jerk (the time derivative of acceleration), and would take into account the dependence of the maximum acceleration and braking rates on velocity. Such a realistic optimisation problem could in general only be solved approximately, for example using a discretisation and dynamic programming.
Our main purpose in this paper is to give rigorous derivations of the unique optimal solutions to the slightly less realistic problems that we have posed above in cases \ref{it:uni} and \ref{it:exp}.

Another problem that is of interest is to optimise for energy consumption, or for some combination of energy consumption and travel time. There are various possible objective functions, depending on whether we take into account friction losses and regenerative braking, and the possibly nonlinear dependence of these on the velocity and acceleration.  Even in the simple scenario where acceleration is costed in proportion to the increase in kinetic energy $\frac{1}{2}m v^2$, but no energy is recovered during braking, the energy cost even of a simple trajectory which accelerates from $v_0$ to a velocity $v_{\mathrm{top}}$ then brakes to velocity $0$ is $\frac{1}{2}m (v_{\mathrm{top}}^2 - v_0^2)$. To express this in a Lagrangian integral, one would need a discontinuous term $mv\max(0,\dot{v})$. This significantly complicates the rigorous analysis, and we leave it for future work.

\subsection{Analysis of the general case}

Before we specialize to case \ref{it:uni} or case \ref{it:exp}, we will first discuss the more general case \ref{it:renewal}. We can apply methods from classical physics and variational calculus to derive the optimal approach trajectory to a red traffic light that turns green at a random time \(T\).
We parametrize this path by the location \(x(t)\) at time \(t\ge0\). Because of the constraints on braking and acceleration, the function $t \mapsto x(t)$ must be continuously differentiable; in fact $\dot x = \frac{d}{dt} x(t)$ is Lipschitz. At the random  moment when the light turns green, the car leaves the path \(t \mapsto x(t)\), accelerates at maximum acceleration \(\al\) to its maximum permitted velocity \(\vmax\) to reach its distant destination \(L\) as quickly as possible. We assume \(T\ge0\) a.s.
Consider a stationary renewal process with i.i.d.~interarrival times $Y_i\geq0$ with finite second moment and cumulative distribution function (CDF) $\Theta: [0, \infty) \to [0,1]$. Let $q = \sup\{t: \Theta(t) < 1\}$. Note that $q = \infty$ if the interarrival times are unbounded. The remaining time until the next arrival in a renewal process is called the residual time. In a stationary renewal process the residual time has probability density given by  (see \cite[\S 3.9]{resnick_adv_sp})
 \begin{equation}
 f(x)=\frac{1}{\mathbb{E}(Y_1)}[1-\Theta(x)].\label{eq:stationary}
 \end{equation}
This shows that any density appearing this way must be bounded and non-increasing on $[0,q)$. By assuming a finite second moment of \(Y_i\), the mean associated with this density becomes finite. 

Motivated by the above, we assume that the density for our random time \(T\) is indeed bounded:
\begin{equation}
    0\leq f(x)\leq K
    \label{eq:boundedf}
\end{equation}
and strictly positive and non-increasing on $[0,q)$. The possible trajectories $x$ will be functions defined on $[0,q)$. 

For example, if we let \(\Theta(x) = \mathbf{1}(x \ge q)\), then \(f\) is the density of the Uniform(\(0,\,q)\) distribution,  recovering Case \ref{it:uni} above. On the other hand, for a memoryless interarrival distribution, where the stationary renewal process is a Poisson Process, equation~\eqref{eq:stationary} yields the density function of the Exponential distribution, i.e. Case \ref{it:exp} above.

We set the initial location \(x(0)=0\), the initial velocity \(\dot x(0)=v_0 \ge 0\) and we assume that the light is at distance \(d\) far enough so that if we brake as hard as possible we will not be forced to pass the traffic light before it turns green. In the case where $q \ge v_0/\beta$, this means that $d\geq\frac{v_0^2}{2\beta}$. In the case where $q < v_0/\beta$, we only need to avoid passing the traffic light before time $q$, so we assume that $d \ge q(v_0 - q\beta/2)$.  We also insist that \(\dot x(t) \ge 0\) at all times \(t\): reversing is not allowed. Since \(x(t)\) is the path followed before the light turns green, \(x(t)\le d\) for all $t \in [0,q)$. Hence the limit \(\lim_{t \to q} x(t)\) exists by the condition $\dot x(t) \ge 0$, and satisfies \(\lim_{t \to q} x(t) \le d\).
Later we will show that we can consider only trajectories such that \(\lim_{t\to q}x(t)=d\).

At the random time $T$ when the traffic light switches from red to green, the car is at location \(x(T)\) and travelling at velocity \(v(T)\). From this moment, accelerating at rate \(\al\) to reach full speed \(\vmax\) and then travelling at full speed to the destination requires time $k(x(T),v(T))$, where
\begin{equation}
k(x,\,v)=\frac{\vmax-v}\al+\Bigl(L-x-\frac{v+\vmax}2\cdot\frac{\vmax-v}\al\Bigr)\cdot\frac1\vmax=\frac1{2\al\vmax}(\vmax-v)^2+\frac1\vmax(L-x).
\label{kdefinition}
\end{equation}
 That is, the absolute arrival time is
 \(T+k\bigl(x(T),\,v(T)\bigr)\).
Next, we take the distribution of $T$ into account to calculate the expected time to reach our destination:
\[
 \Eb\bigl[T+k\bigl(x(T),\,\dot x(T)\bigr)\bigr]=\int_0^q \bigl[t+k\bigl(x(t),\,\dot x(t)\bigr)\bigr]\cdot f(t)\di t.
\]
It is this quantity that we aim to minimise by picking the optimal trajectory \(x(t)\) subject to our constraints.

\subsection{Related work}
Many works consider variants of our problem. Perhaps the closest to our question is a question on MathOverflow raised by P\'alv\"olgyi \cite{mathofl_opt_red_light}. This post poses the problem that we address in this paper  but with no constraint on the maximum braking rate, so effectively $\beta = \infty$ there. Carneiro gives the most important building blocks of the optimal solution in a reply. These are the Euler--Lagrange curves of a certain action integral formulation that we also use shortly in this article. Carneiro also mentions some of the boundary conditions that will be the main objects of our study. We remark that this reply concerns the special cases of Uniform and Exponential distributions, which are also the two special cases that we treat in detail.

The main contribution of our work is to incorporate the constraints on maximum velocity, acceleration and deceleration. This turns out to be far from trivial, requiring us to go well beyond Carneiro's observations. In particular, in the Exponential case, we prove the existence of a critical velocity at which the optimal trajectory suddenly switches to \(-\be\) deceleration from an Euler--Lagrange phase, with a discontinuity in deceleration. When we first encountered this phenomenon it came to us as a surprise. 

There have been several other works published in peer-reviewed journals. Optimisations for energy consumption, which we do not consider here, have been investigated by Katz \cite{katz_how_traffic_light} and Lawitzky, Wollherr, Buss \cite{law_woll_buss_energy_opt_traffic_lights}. The former considers the Uniform distribution and analyses the probability of reaching the green or the red phases of the lights, with obvious implications for the energy consumption. The latter work sets up a modified Bellman equation for the problem and provides algorithms to solve it. Numerical results for various distributions follow. 

Stochastic optimal control problems naturally lend themselves to dynamic programming methods for approximating optimal solutions, see the books of Bertsekas \cite{Bert_dynprogI,Bert_dynprogII}. Here we take a different approach and rigorously derive the explicit form of the optimal trajectory in the Exponential and Uniform cases. We use concepts of action integrals from classical mechanics and intuition from statics of fluids. Connecting optimal control problems to equations of motion in mechanics is not new, see for example Join, Delaleau and Fliess \cite{join_del_fli_e-l_opt-cont}.

\section{Action integral and Euler--Lagrange curves}\label{Action}

Define the \emph{Lagrangian} $\Lc$ by 
\begin{equation}\label{eq:LagrangianDef}
\Lc(x,\,v,\,t)=\bigl[t+k(x,\,v)\bigr]\cdot f(t).
\end{equation}
The expected time to reach the destination becomes a classical action integral:
\begin{equation}
 S:\,=\int_0^q \Lc\bigl(x(t),\,\dot x(t),\,t\bigr)\di t.\label{eq:ai}
\end{equation}

Notice that up to this moment we have not incorporated our constraints \(0\le\dot x(t)\le\vmax\) and \( -\beta \le \ddot x(t)\le\al\) into the function \(k\) or the Lagrangian. We will solve the unconstrained variational problem first, then deal with the constraints later.

We recap the idea behind deriving the \emph{Euler--Lagrange equations of motion}. The \emph{principle of least action} in physics states that mechanical systems follow paths \(t \mapsto x(t)\) that are critical points of an action functional like~\eqref{eq:ai} in the space of paths meeting endpoint constraints. If we forget about our restrictions \(0\le\dot x(t)\le\vmax\) and \(-\beta \le \ddot x(t)\le\al\), then this is exactly what we are looking for in our stochastic control problem. The idea is that perturbing an optimal path by adding a small variation \(\ve\cdot\eta(t)\) can only make the action larger, for any choice of the function \(\eta\). Hence, for any \(\eta\), the modified action
\[
 S_\ve:\,=\int_0^q\Lc_\ve\bigl(x(t),\,\dot x(t),\,t\bigr)\di t
 =\int_0^q \Lc\bigl(x(t)+\ve\eta(t),\,\dot x(t)+\ve\dot\eta(t),\,t\bigr)\di t
\]
is minimal at \(\ve=0\) for any \(\eta\). Here we consider only an interval of time \(0\le a<b<q\) and assume that \(\eta\equiv0\) outside \((a,\,b)\). This will inform us about the unconstrained solution in this interval, for any boundary conditions. We can use this solution in any interval where it actually satisfies our constraints. As explained in \cite{land1}, we arrive at the Euler--Lagrange equation
\[
 \frac{\pt\Lc}{\pt x}-\dd t\frac{\pt\Lc}{\pt v}=0.
\]
Substituting in our Lagrangian, for \(t \in [0,q)\) we obtain
\[
 \begin{aligned}
  f(t)\cdot\frac{\pt k}{\pt x}-\dd t\Bigl(f(t)\cdot\frac{\pt k}{\pt v}\Bigr)&=0\\
  \frac{\pt k}{\pt x}-\frac{\pt k}{\pt v}\cdot\dd t{\ln f(t)}-\dd t\frac{\pt k}{\pt v}&=0\\
  -\frac1{\vmax}+\frac1{\al\vmax}(\vmax-v)\cdot\dd t{\ln f(t)}+\frac1{\al\vmax}\dd t(\vmax-v)&=0
 \end{aligned}
\]
that is,
\[
 \ddot x+\dd t{\ln f(t)}\cdot\dot x-\dd t{\ln f(t)}\cdot\vmax+\al=0.
\]
Writing \(v(t)=\dot x(t)\) we have the ODE for the function \(v=v(t)\)
\begin{equation}
 \dot v+\dd t{\ln f(t)}\cdot v-\dd t{\ln f(t)}\cdot\vmax+\al=0.\label{eq:masterode}
\end{equation}
We will refer to solutions of this ODE as \emph{Euler--Lagrange curves}, often represented by $\vel$.

\section{Existence of an optimal velocity profile}\label{sc:exists}

Before we specialise to the case of any particular distribution, we will show that under the general assumptions stated so far there exists at least one optimal trajectory. First, we need to formalise the space of velocity functions we are working with.

\begin{df}[$(\alpha,\beta)$-Lipschitz Function] For any $q \in (0,\infty) \cup \{\infty\}$, and constants $\alpha,\beta \ge 0$,
a function $f : [0,q) \to \mathbb{R}$ is said to be \emph{$(\alpha,\beta)$-Lipschitz} when
\[
-\beta\,(t_2 - t_1) \;\le\; f(t_2) - f(t_1) \;\le\; \alpha\,(t_2 - t_1)
\quad \forall\; 0 \le t_1 \le t_2 < q.
\]
\end{df}
Every allowed velocity function $t \mapsto v(t)$ for our car is an $(\alpha,\beta)$-Lipschitz function. A combination of this condition, the bounds $0 \le v(t) \le \vmax$, and the total distance constraint gives us two  spaces of functions to work with:
\begin{df}
We define two spaces of functions $V$ and $W$ as follows:
\[
V \;=\;
\Bigl\{\,v:[0,q)\to[0,v_{\max}]\;\Big|\; v(0) = v_0 \text{ and }
v\text{ is $(\alpha,\beta)$-Lipschitz with}\ 
\int_{0}^{q}v(t)\,\mathrm{d}t= d
\Bigr\}.
\]
\[
W \;=\;
\Bigl\{\,v:[0,q)\to[0,v_{\max}]\;\Big|\;v(0) = v_0 \text{ and } 
v\text{ is $(\alpha,\beta)$-Lipschitz with}\ 
\int_{0}^{q}v(t)\,\mathrm{d}t\le d
\Bigr\}.
\]
\end{df}
Notice that $V \subseteq W$. Our assumption that $d$ is large enough that we can avoid passing the traffic light until time $q$ ensures that $V$ and $W$ are not empty. The action integral functional $S$ given in equation~\eqref{eq:ai} may be rewritten in terms of $v$ using $x(t) = \int_0^t v(s) \di s$:
\begin{equation}\label{eq:Sdef}
S(v) = \int_0^q \Lc\left(\int_0^t v(s) \di s, v(t),t\right) \di t.
\end{equation}
We want to show that there exists some $v^* \in V$ such that
\[S(v^*) = \min_{v \in W} S(v).\]
In order to do this we define a metric $D$ on $W$ to make it into a compact metric space on which $S$ is a continuous functional. 
The metric $D$ is defined using $f$ and $F$, the probability density function and cumulative distribution function of $T$, the residual time until the traffic light turns green. As motivated in the introduction by our choice to obtain $T$ via a stationary renewal process, we assume that $f$ is a bounded non-increasing function. We also assume that \[ \mathbb{E}(T)  = \int_0^q 1 - F(t) \di t < \infty,\] which is equivalent to assuming that the interarrival times $Y_i$ have finite second moment.

\begin{df}

Let $\zeta: [0,q) \to (0,q)$ be given by $\zeta(t) = 1 - F(t) + f(t)$. For $g,h \in W$, let
\[D(g,h) = \int_0^q |g(t) - h(t)|\,\zeta(t) \di t.\]
\end{df}
Note that $\zeta$ is non-increasing, and integrable:
\[ \int_0^q \zeta(t) \di t = 1 + \mathbb{E}(T) < \infty.\]
\begin{lm}
 $D$ is a metric on $W$ and $D$-convergence is equivalent to locally uniform convergence on $[0,q)$.
\label{dconv-h}
\end{lm}

\begin{proof}
Since every $g\in W$ is Lipschitz and hence continuous, $D$ is indeed a metric on $W$. (If $D(g,h)=0$ then $g=h$ a.e., hence $g=h$ everywhere by continuity.) 
Define $M:=\max\{\alpha,\beta\}$.

\smallskip
\noindent\emph{(i) $D(g_n,g)\to0$ $\Longrightarrow$ $g_n\to g$ locally uniformly.}
Fix $0 < \kappa < q$ and $\epsilon>0$. Suppose, towards a contradiction, that
\[
\sup_{t\in[0,\kappa]}|g_n(t)-g(t)|\ \ge\ \epsilon
\]
for infinitely many $n$. For each such $n$ choose $t_0\in[0,\kappa]$ with $|g_n(t_0)-g(t_0)|\ge\epsilon$.
By the $(\alpha,\beta)$-Lipschitz property of $g_n$ and $g$,
\[
|g_n(t)-g(t)|\ \ge\ |g_n(t_0)-g(t_0)|-M|t-t_0|-M|t-t_0|
\ \ge\ \epsilon-2M|t-t_0|.
\]
Let
\[
\delta\;=\;\min\Bigl(\frac{\epsilon}{2M},\,\frac{\kappa}{2}\Bigr).
\]
Then for all $t\in [t_0-\delta,\,t_0+\delta]\cap[0,\kappa]$ we have $|g_n(t)-g(t)|\ge \epsilon/2$. Because $\zeta$ is non-increasing and $t\le \kappa$ on this set, $\zeta(t)\ge \zeta(\kappa)$. Hence, writing
$I:=[t_0-\delta,\,t_0+\delta]\cap[0,\kappa]$ and noting $|I|\ge \delta$, we obtain
\[
D(g_n,g)
=\int_{0}^{q}|g_n-g|\,\zeta
\ \ge\ \int_{I}\frac{\epsilon}{2}\,\zeta(t) \di t
\ \ge\ \frac{\epsilon}{2}\,\zeta(\kappa)\,|I|
\ \ge\ \frac{\epsilon}{2}\,\zeta(\kappa)\,\delta
\ >\ 0,
\]
a contradiction to $D(g_n,g)\to0$. Therefore $\sup_{t\in[0,\kappa]}|g_n(t)-g(t)|\to0$. Since $\kappa>0$ was arbitrary, $g_n\to g$ locally uniformly.

\bigskip
\noindent\emph{(ii) $g_n\to g$ locally uniformly $\Longrightarrow$ $D(g_n,g)\to0$.}
Let $\epsilon>0$. Split the $D$-integral at some large $R>0$:
\[
D(g_n,g)
=\underbrace{\int_{0}^{R}|g_n-g|\,\zeta(t) \di t}_{\text{(A)}}
+\underbrace{\int_{R}^{q}|g_n-g|\,\zeta(t) \di t}_{\text{(B)}}.
\]

\emph{(A) Head estimate.}
By uniform convergence on $[0,R]$, there exists $N$ and $\delta>0$ such that for all $n\ge N$,
$\sup_{t\in[0,R]}|g_n(t)-g(t)|<\delta$. Hence, for $n\ge N$,
\[
\text{(A)}\ \le\ \delta\int_{0}^{R}\zeta(t)\di t.
\]
Since $\int_{0}^{R}\zeta(t)\di t<\infty$, choose $\delta$ small enough that
$\delta\int_{0}^{R}\zeta(t)\di t<\epsilon/2$.

\emph{(B) Tail estimate.}
Because $0\le g_n,g\le v_{\max}$, we have $|g_n-g|\le 2v_{\max}$, whence
\[
\text{(B)}\ \le\ 2v_{\max}\int_{R}^{\infty}\zeta(t)\di t.
\]
By integrability of $\zeta$, choose $R$ so large that $\displaystyle \int_{R}^{\infty}\zeta(t)\di t<\frac{\epsilon}{4v_{\max}}$,
yielding $\text{(B)}<\epsilon/2$.

Combining (A) and (B), for this $R$ and all $n\ge N$ we have $D(g_n,g)<\epsilon$. Hence $D(g_n,g)\to0$.

\smallskip
This completes the proof of the equivalence between $D$-convergence and locally uniform convergence.
\end{proof}

We now show some properties of the metric space \((W,D)\) and the functional $S$ that will be essential in proving the uniqueness of an optimiser.

\begin{lm}
The metric space \((W,D)\) is compact.
\end{lm}

\begin{proof}
Recall from Lemma~\ref{dconv-h} that the topology induced by the  metric $D$ is the topology of locally uniform convergence on $[0,q)$.   In metric spaces, sequential compactness is equivalent to compactness. We will show that $(W,D)$ is sequentially compact; that is to say, every sequence of functions in $W$ has a subsequence that converges locally uniformly (and hence with respect to $D$) to a limit that lies in $W$. Any sequence $(v_n)_{n=1}^\infty$ of functions in $W$ is uniformly bounded, since they take values in \([0,v_{\max}]\), and equicontinuous by the $(\alpha,\beta)$-Lipschitz condition. Hence on each compact interval \([0,\kappa]\) (for $\kappa < q$), by the Arzel\`a-Ascoli theorem there is a subsequence converging uniformly. So by a diagonal argument, there is a subsequence converging locally uniformly on $[0,q)$, to some function \(v: [0,q) \to \mathbb{R}\). We must show that $v \in W$. The pointwise convergence implies that $v$ takes its values in the closed interval $[0,\vmax]$, and that $v(0) = \lim_{n \to \infty} v_n(0) = v_0$. It also implies that for any $0 \le t_1 \le t_2 < q$ we have
\[
              v(t_{2}) - v(t_{1})
              =   \lim_{n \to \infty} (v_n(t_2) - v_n(t_1))
              \le \,\alpha(t_{2}-t_{1}),
            \]
and
            \[
              v(t_{2}) - v(t_{1})
              =   \lim_{n \to \infty} (v_n(t_2) - v_n(t_1))
              \ge \,-\beta(t_{2}-t_{1}).
            \]
That is, \(v\) is \((\alpha,\beta)\)-Lipschitz. Finally, by Fatou's lemma, 
\[ \int_0^q v(t) \di t  = \int_0^q \lim_{n \to \infty} v_n(t) \di t \le \liminf_{n \to \infty} \int_0^q v_n(t) \di t \le d,\]
since all the functions $v_n$ are non-negative measurable functions.  Therefore \(v\in W\) as required.
\end{proof}

\begin{lm}
\(S\) is continuous on \((W,D)\).  
\end{lm}
\begin{proof}
The function $k(x,v)$ displayed in~\eqref{kdefinition} is a polynomial in $x$ and $v$, so it is bounded and Lipschitz on $[0,d]\times[0,\vmax]$. That is to say, there exists a finite constant $C$ such that for any two pairs of points $(x,g)$ and $(y,h)$ in $[0,d] \times [0,\vmax]$,
\[|k(x,g)-k(y,h)|\leq C(|x-y|+|g-h|).\]
Let $x_g(t) = \int_0^t g(s) \di s$ and $x_h(t) = \int_0^t h(s) \di s$, and note that 
\[ |x_g(t) - x_h(t)| \le \int_0^t |g(s) - h(s)| \di s,\]
so
\[
\begin{aligned}\int_0^q |x_g(t) - x_h(t)| f(t) \di t & \leq  \int_0^q \int_0^t |g(s) - h(s)|\di s\, f(t) \di t\\
& =  \int_0^q |g(s) - h(s)| \int_s^q f(t) \di t \di s\\
& =  \int_0^q |g(s) - h(s)
| (1-F(s)) \di s .
\end{aligned}
\]
Hence
\[
\begin{aligned}
|S(g)-S(h)|
&\leq \int_0^{q} |k(x_g(t),g(t)) - k(x_h(t),h(t))|\, f(t)\, \di t \\
&\leq \int_0^{q} C\big(|x_g(t)-x_h(t)| + |g(t)-h(t)|\big)\, f(t)\, \di t \\
& \le C\int_0^q |g(s) - h(s)| (1-F(s) + f(t)) \di s   \\
&= C\cdot D(g,h).
\end{aligned}
\]
Thus \(S\) is Lipschitz continuous (in particular continuous) on \((W,D)\).
\end{proof}

We are now prepared to prove the existence an optimal velocity trajectory in $V$.
\begin{tm}
\label{thm:existence-minimiser}
For the functional $S: W \to \mathbb{R}$ defined by~\eqref{eq:LagrangianDef} and~\eqref{eq:Sdef},
 there exists at least one \(v^*\in W\) that minimises \(S\), and in fact any minimiser satisfies
\(\displaystyle\int_{0}^{q}v^*(t)\di t=d\),
hence lies in $V$.
\end{tm}

\begin{proof}
Since \((W,D)\) is compact and $S(v)$ is continuous, it follows from the Extreme Value Theorem that there exists at least one minimiser of $S$ in $W$. We will show that every minimiser in fact lies in $V$. Suppose (towards a contradiction) that \(v^*\in W\) minimises $S$ but \(\displaystyle\int_{0}^{q}v^*(t)\di t = d_{0}<d\).  We construct
\(w\in W\) with \(\int_0^q w(t) \di t <d\), \(w\ge v^*\), and \(w\not\equiv v^*\) and then show that this contradicts the minimality of $v$.

\begin{enumerate}[label=(\alph*)]

  \item \emph{Constructing $w$ in the case $q = \infty$.}
    For a small \(\epsilon>0\), set
    \[
      w(t)=\max\bigl(v^*(t),\,v^*(t(1-\epsilon))\bigr),
      \quad
      y(t)=\min\bigl(v^*(t),\,v^*(t(1-\epsilon))\bigr).
    \]
Note that    
 \(w(t)\ge v^*(t)\) and \(0\le w(t)\le v_{\max}\) for every $t \ge 0$, and since
 \(w\) is the pointwise maximum of two $(\alpha,\beta)$-Lipschitz functions, it is itself $(\alpha,\beta)$-Lipschitz. By the change of variables \(s=t(1-\epsilon)\),
    \[
      \int_{0}^{\infty}v^*(t(1-\epsilon))\di t
      =\frac{1}{1-\epsilon}\int_{0}^{\infty}v^*(s)\di s
      =\frac{d_{0}}{1-\epsilon}.
    \]
    Note \(0\le y\le v^*\) and \(y(t)\to v^*(t)\) pointwise as \(\epsilon\to0\), so by dominated convergence 
    \(\int y(t)\di t\to d_{0}\).  Hence
    \[
      \int_{0}^{\infty}w(t)\di t
      =\int_0^\infty v^*(t) \di t + \int_0^\infty v^*(t(1-\epsilon))\di t \;-\;\int_0^\infty y(t)\di t
      =\Bigl(1+\tfrac{1}{1-\epsilon}\Bigr)d_{0}
        \;-\;\int_0^\infty y(t) \di t      \;\xrightarrow[\epsilon\to0]{}\;d_{0}.
    \]
    Thus for \(\epsilon\) sufficiently small, \(\int_0^\infty w(t) \di t <d\), so \(w\in W\).
If there exists no $t' > 0$ such that $w(t') > v^*(t')$ then we have $v^*(t) \ge v^*((1-\epsilon)t)$ for all $t > 0$, but this implies \[\int_0^\infty v^*(t) \di t \ge \int_0^\infty v^*((1-\epsilon)t) \di(t)  = \frac{1}{1-\epsilon}\int_0^\infty v^*(t) \di t\,  \] which is only possible if $v^* \equiv 0$, since we know $0 \le \int_0^\infty v^*(t) \di t \le d$ and $v^*(t) \ge 0$ for all $t$. However, we know $v^* \not\equiv 0$ because that is the unique \emph{maximiser} of $S$ and since $d > 0$, $W$ contains at least one non-zero trajectory that is better. Hence there exists at least one $t' > 0$ such that $w(t') > v^*(t')$, and since $v$ and $w$ are continuous there is a non-empty open set of such $t'$.
\item\emph{Constructing $w$ in the case $q < \infty$.} Let $v^+$ be trajectory starting at $v^+(0) = v_0$ which accelerates at rate $\alpha$ until it reaches velocity $\vmax$ and then remains at $\vmax$ until time $q$. For a small $\epsilon > 0$, define
\[w(t) = (1-\epsilon) v^*(t) + \epsilon v^+(t).\]
Then $w(t) \ge v^*(t)$ and $0 \le w(t) \le \vmax$ for all $t \in [0,q]$. Moreover, $w$ is $(\alpha,\beta)$-Lipschitz since it is a convex linear combination of $(\alpha,\beta)$-Lipschitz functions. We have
\[\int_0^q w(t) \di t  = (1-\epsilon)\int_0^q v^*(t) \di t  + \epsilon \int_0^q v^+(t) \di t\]
so for sufficiently small $\epsilon$ we have $\int_0^q w(t) \di t < d$ and hence $w \in W$. Because of our assumption that the traffic light is near enough to make our problem non-trivial, (see equation~\eqref{eq:trafficlightisnear}), $v^{+} \not\in W$, so  $v^* \not\equiv v^{+}$.  Hence there exists a non-empty open set of times $t' \in [0,q)$ such that $w(t') > v^*(t')$. 
  \item \emph{Contradicting statement.}  
At every time $t \in [0,q)$, the car with velocity function $w$ is both at least as fast as the car with velocity function $v$, and at least as far forward; for a non-empty open set $U$ of times \(t'\) when \(w(t')>v^*(t')\),  it is strictly further forward, and there is a positive probability that $T \in U$. Hence a car following trajectory $w$ has strictly smaller expected arrival time than a car following trajectory $v$, contradicting the minimality of $v$.
    
\end{enumerate}

Therefore \(\displaystyle\int_{0}^{q}v^*(t)\di t = d\), and \(v^*\in V\).

\end{proof}
Now that we have have shown that there is at least one optimal trajectory $v^{*}$ in the general case of a bounded non-increasing probability density function $f$, we can examine the structure of optimal trajectories.

\section{Finding an Optimal Trajectory}\label{sc:findit}

\noindent
We now seek to minimise the action integral introduced in Section~\ref{Action}. Recall that the action is
\[
\int_0^q\Bigl(t+\frac1{2\al\vmax}(\vmax-\dot x)^2+\frac1\vmax(L-x)\Bigr)\cdot f\di t
\]
subject to $x(t) = \int_0^t v(s) \di s$ for $v \in W$, i.e.~$\dot{x} = v$.  The Lagrangian
can be reduced by removing constant terms $(\frac{L}{\vmax}+\frac{\vmax}{2\al})\cdot f(t)$ and $t \cdot f(t)$, since they do not affect minimisation of the integral.  We also multiply the action integral by the constant $2\alpha \vmax$, so that our goal is now to minimise
\[
 \int_0^q\Bigl(\dot x^2-2\vmax\dot x-2\al x\Bigr)\cdot f \di t.
\]
We have already seen in Theorem~\ref{thm:existence-minimiser} that every minimiser $v^*$ lies in $V$, so we now assume that $\int_0^q \dot{x}(s) \di s = d$.   Hence we may choose an arbitrary constant $B$ (effectively a Lagrange multiplier) and instead minimise
\[
 \int_0^q\Bigl(\dot x^2-2\vmax\dot x-2\al x\Bigr)\cdot f+2B\dot x\di t.
\]
Notice that  with the cumulative density function \(F\) for the density \(f(t)\),
\[
 \int_0^qxf\di t=x(q)F(q)-x(0)F(0)-\int_0^q\dot xF\di t=d-\int_0^q\dot xF\di t.
\]
The constant \(d\) is irrelevant to the minimisation, so we remove it. Finally, our goal is to minimise the following integral:
\[
 \widehat{S} = \int_0^qf v^2+2(\al F+B-\vmax f) v \di t.
\]

\subsection{Pressure interpretation}\label{pressureinterpretation}

Next, we define a new dummy variable $C$ which allows us to write
\(
f\dot x^2+2(\al F+B-\vmax f)\dot x
\)
as an integral. Define the \emph{pressure},
\begin{equation}
P_B(t,\,C)=-(\vmax - C)f(t)+\al F(t)+B\label{eq:pdef}
\end{equation}
and rewrite our action integral as 
\begin{equation}
 \int_0^q\int_0^{\dot x}P_B(t,\,C)\di C\di t.
\label{eq:swithpressure}
\end{equation}
Here we have divided by a factor of 2 but this does not change the minimisation. Note that the pressure is bounded by 
\begin{equation}
    -\vmax K + B\leq P_B(t,\,C) \leq \al + B
    \label{eq:boundedp}
\end{equation}
since $0\leq f \leq K$ by \eqref{eq:boundedf} and $0\leq C\leq \vmax$ on the domain $t\in[0,q)$. A small change $\di C$ in the variable \(C\) is an infinitesimal unit of height and a change $\di t$ is a unit of width. Hence, $\di C \di t$ can be thought of as an infinitesimal element of two-dimensional \emph{volume} of an imaginary liquid. The distance constraint imposed by a Lagrange multiplier corresponds to having a fixed volume of liquid.

It makes sense to call $P_B(t,C)$ pressure because (potential) energy is the volume-integral of pressure. Moreover, we can now treat the Euler--Lagrange curves as natural isobar level lines of a filled container. The constrained parts of any optimal solution curve (phases where the acceleration is $\alpha$, the deceleration is $\beta$, or the velocity is $0$ or $\vmax$) can be thought of as rigid plates that are pushed by the force acting from the pressurised liquid within the tank. Unlike a regular tank of liquid, our liquid fills according to level lines prescribed by Euler--Lagrange curves given by
\begin{equation}
\vel(t)= \vmax-\frac B{f(t)}-\al\frac{F(t)}{f(t)}.
\label{eulerlagrangegeneral}
\end{equation} 
By varying the arbitrary constant $B$ we obtain a family of Euler--Lagrange curves.
The reader can verify that these trajectories are indeed Euler--Lagrange curves by substituting $\vel$ into \eqref{eq:masterode}.

The Euler--Lagrange curves \eqref{eulerlagrangegeneral} are simply the isobar curves of the pressure \(P_B(t,C)\), and it is clear why this is so. Suppose the point \(\bigl(t_0,v(t_0)\bigr)\) is on an isobar curve \(v(t)\) of total volume \(d\). We can set \(B\) so that \(P_B(t_0,v(t_0))=0\), which also means that the pressure is zero along this isobar. Due to the volume constraint, any other curve must go both above and below \(v(t)\) at different regions of \(t\). As \(P_B(t,C)\) is monotone in \(C\), the regions above \(v(t)\) contribute with positive values in the action integral, while the regions below, which are missing liquid up to the isobar, are missing negative contributions. Hence this other curve would be better off equalising between these two regions so it cannot be optimal.

\begin{lm}\label{lem:movetowardsEL}
 Suppose $v, v'$ are distinct elements of $V$ and $\vel$ is some Euler--Lagrange curve, as in~\eqref{eulerlagrangegeneral}. Suppose that for every $t \in [0,q)$, $v'(t)$ lies in the closed interval with endpoints $v(t)$ and $\vel(t)$. Then $S(v') < S(v)$, so $v$ is not an optimal trajectory.
 \label{interpolatedv}
\end{lm}
\begin{proof}
Let $\mathcal{I} = \{t \in [0,q)\,:\, v(t) < v'(t) \le\vel(t)\}$ and $\mathcal{J} = \{t \in [0,q)\,:\, v(t) > v'(t) \ge\vel(t)\}$. Then $v(t) = v'(t)$ for all $t$ outside $\mathcal{I} \cup \mathcal{J}$, and due to the integral constraints $\int_0^qv(t) \di t = d = \int_0^qv'(t) \di t$ we have
\[\int_\Ic v'(t) - v(t) \di t = \int_\Jc v(t) - v'(t) \di t.\]
Let $p$ be the constant value of the pressure  $P_B(t,\vel(t))$. Then for $t \in \Ic$ and $v(t) \le C \le v'(t)$ we have $P_B(t,C) < p$, and for $t \in \Jc$ and $v'(t) \le C \le v(t)$ we have $P_B(t,C) > p$.
Now 
\[S(v') - S(v) = \int_\Ic \int_{v(t)}^{v'(t)} P_B(t,C) \di C \di t - \int_\Jc \int_{v'(t)}^{v(t)} P_B(t,C) \di C \di t <  \int_\Ic \int_{v(t)}^{v'(t)} p \di C \di t - \int_\Jc \int_{v'(t)}^{v(t)} p \di C \di t < 0.\]
\end{proof}

\subsection{Elements of the optimal trajectory}

As we will see later, the acceleration and velocity constraints can move the solution away from Euler--Lagrange, but the liquid analogy can guide our intuition in the process. Obviously, \(\vmax\) restricts the height, while the acceleration bounds restrict the slope of the liquid's surface. These latter can be thought of as slanted, but possibly moveable walls of the pool. As we now show, the optimal trajectory can only be formed from a combination of segments of Euler--Lagrange curve, $\alpha$ acceleration, $\beta$ deceleration and constant $\vmax$ or 0 velocity. 

From now on we will assume that for any given $B$,
\[
\dot\vel(t)+\be=\frac{\dot f(t) \cdot(B+F(t))}{f(t)^2}-\alpha+\beta
\]
is either
\begin{itemize}
 \item positive for all \(0\le t<q\), or
 \item negative for all \(0\le t<q\), or
 \item changes sign once from positive to negative in the interval \((0,q)\).
\end{itemize}
This will be relevant for examples where the slope of \(\vel\) can become steeper than \(-\be\), and remains steeper permanently, such as the Exponential case. However, if the slope later comes back to be less steep than \(-\be\), then this causes complications that go beyond the scope of this paper.

There exist distributions $F$ for which some Euler--Lagrange curves fail to be concave, and then there exists a choice of $\beta$ such that the assumption above does not hold. One such distribution has cumulative distribution function

\[
F(t) = c \left( 
  2t 
  - \frac{1}{p} 
    \ln\!\left(
        \dfrac{\cosh\!\big(p(t - 1)\big)}{\cosh(-p)} \cdot 
        \dfrac{\cosh\!\big(p(t - 2)\big)}{\cosh(-2p)}
    \right)
\right)
\]
for positive constants $p$ and $c$, where $c$ is chosen so that $\lim_{t \to \infty} F(t) = 1$.

\begin{condpics}
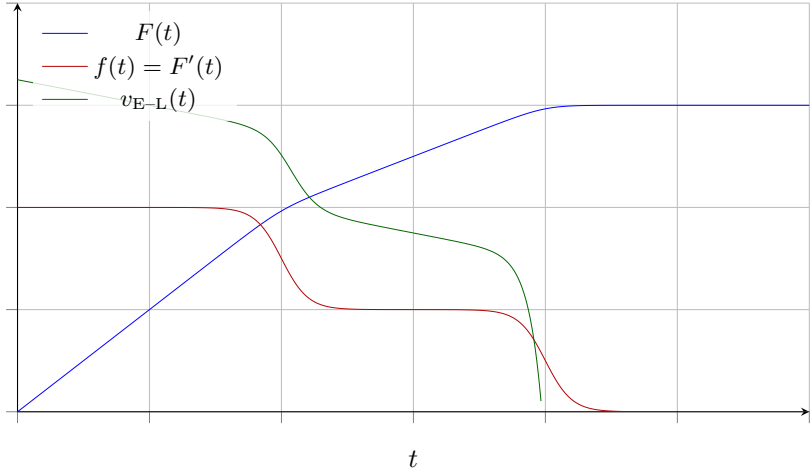
\begin{figure}[h!]
\centering
\begin{tikzpicture}
  \begin{axis}[
    width=12cm, height=7cm,
    domain=0:3, samples=600,
    xmin=0, xmax=3,
    ymin=0, ymax=8,
    axis lines=left,
    xlabel={$t$}, ylabel={}, 
    grid=both,
    tick align=outside,
    xticklabels={}, 
    yticklabels={}, 
    legend style={
      draw=none,
      at={(0.02,0.98)},
      anchor=north west,
      fill=white,
      fill opacity=0.8,
      text opacity=1,
      font=\small
    },
    every axis plot/.append style={very thin, no markers}
  ]

    \addplot[smooth, blue, restrict y to domain=0:8]
      ({x},
       {2*x - (1/10)*ln( (cosh(10*(x-1))/cosh(10)) * (cosh(10*(x-2))/cosh(20)) )}
      );
    \addlegendentry{$F(t)$}

    \addplot[smooth, red!70!black, restrict y to domain=0:8]
      ({x},
       {2 - tanh(10*(x-1)) - tanh(10*(x-2))}
      );
    \addlegendentry{$f(t)=F'(t)$}

    \addplot[smooth, green!40!black, restrict y to domain=0:8]
      ({x},
       {7 - 2/(2 - tanh(10*(x-1)) - tanh(10*(x-2)))
        - (2*x - (1/10)*ln( (cosh(10*(x-1))/cosh(10)) * (cosh(10*(x-2))/cosh(20)) ))
          /(2 - tanh(10*(x-1)) - tanh(10*(x-2)))}
      );
    \addlegendentry{$\vel(t)$}

  \end{axis}
\end{tikzpicture}
\caption{Plot of $F(t)$, its derivative $f(t)=F'(t)$, and the associated Euler--Lagrange curve $\vel(t)$.}
\end{figure}
\end{condpics}

\begin{tm}
\label{thm:proofofcombinatorics}
Any optimal trajectory $v^{*} \in V$ must be constituted solely of some combination of finitely many segments of Euler--Lagrange curve, $\alpha$ acceleration, $\beta$ deceleration and constant $\vmax$ or $0$ velocity. In fact, the phases of positive velocity must occur in one of the following seventeen orders. The boxed cases are those that cannot occur if the Euler--Lagrange curves never have slope less than $-\beta$ while they are positive.
\begin{itemize}
\item $\alpha$, \fbox{$\alpha \leadsto \beta$}, \fbox{$\alpha \leadsto EL \leadsto \beta$},
\fbox{$\alpha \leadsto \vmax \leadsto EL \leadsto \beta$},
$\alpha \leadsto EL$,
$\alpha \leadsto \vmax \leadsto EL$, 
\fbox{$\alpha \leadsto \vmax \leadsto \beta$}, \hbox{$\alpha \leadsto \vmax$},
\item $\beta$, \fbox{$\beta \leadsto EL \leadsto \beta$}, $\beta \leadsto EL$,
\item \fbox{$v_{\max} \leadsto EL \leadsto \beta$}, $v_{\max} \leadsto EL$, \fbox{$v_{\max}\leadsto\beta$}, $\vmax$,
\item \fbox{$EL \leadsto \beta$}, $EL$.
\end{itemize}
 If $q=\infty$, to satisfy the fixed volume condition $d$, then the cases $\alpha$, $\alpha\leadsto\vmax$ and $\vmax$ are excluded, and every optimal trajectory is followed by a phase of $0$ velocity after it comes to a halt at the traffic light. If $q < \infty$, a final phase of $0$ velocity may or not be present, and if present it cannot follow an $\alpha$ or $\vmax$ phase.
\end{tm}
We will prove this result over the course of this subsection. We approach the problem by eliminating non-optimal curves until we are left with only those satisfying the statement of the above theorem.

Theorem~\ref{thm:proofofcombinatorics} does not completely determine a unique optimal trajectory for given (generic) values of $v_0$ and $d$, since it does not specify the times at which the transitions between the phases occur. For some of the patterns, these transition times are always determined by the initial velocity and the distance constraint $\int_0^q v(t) \di t = d$. Omitting the final $0$ phase if it present, these ten patterns are:
\[ \alpha \leadsto \beta,\; \alpha \leadsto EL,\; \alpha \leadsto v_{\max} \leadsto EL,\; \alpha \leadsto v_{\max} \leadsto \beta,\; \beta,\; \beta \leadsto EL,\; v_{\max} \leadsto EL,\; v_{\max} \leadsto \beta,\; EL.\]
Among these are some patterns that can only occur when either $v_0 = v_{\max}$ or an equation relating between $v_0$ and $d$ is satisfied. For example, the pattern
$\beta \leadsto 0$ requires $d = v_0^2/2\beta$, and the pattern $EL \leadsto 0$ requires a more complicated equation to be satisfied (since there is a unique Euler--Lagrange curve that has velocity $v_0$ at time $0$, and its integral from time $0$ up to the time at which it reaches velocity $0$ must equal $d$). Each of the other patterns can only occur when $v_0$ and $d$ satisfy some (pattern-dependent) inequalities. Phase diagrams which show which patterns are optimal for different regions in $(v_0,d)$-space are discussed in detail in Section~\ref{sec:unifphasediagrams} and Section~\ref{sec:expphasediagrams}.

For the remaining four patterns, those that include a phase of Euler--Lagrange motion that is not the first phase and is followed by a phase of $\beta$ deceleration, we will need to do further work (in Section~\ref{sec:whentoslamthebrakes}) in order to determine the optimal transition times. Again omitting the trailing $0$ velocity phase, these patterns are:
\[ \alpha \leadsto EL \leadsto \beta,\; \alpha \leadsto \vmax \leadsto EL \leadsto \beta,\; \beta \leadsto EL \leadsto \beta,\; \vmax \leadsto EL \leadsto \beta\,\; EL, \leadsto \beta.\]

The overall idea of the proof of Theorem~\ref{thm:proofofcombinatorics} is to analyse the trajectories in $V$ for which it is not possible to use Lemma~\ref{lem:movetowardsEL} to show non-optimality. In applying that lemma, we have the freedom to choose any Euler--Lagrange curve. A key step in our proof is Proposition~\ref{prop:upperandlower}, which says that if none of the possible Euler--Lagrange curves allow us to make an integral-preserving perturbation of $v \in V$ as in the statement of Lemma~\ref{lem:movetowardsEL}, then there must exist a particular value $p_0$ of the pressure for which $v$ has a highly constrained structure on the set $\{t \ge 0: P_B(t,v(t)) > p_0\}$, and a different highly constrained structure on the set $\{t: P_B(t,v(t)) < p_0\}$. We finish by using this result to make a case-by-case analysis of the possible overall shapes of any optimal trajectory.

\begin{df}
 A trajectory $v \in V$ is \emph{locally non-increasable} at a point $t \in (0,q)$ if there exists  $\epsilon>0$ such that on each of the intervals $(t-\epsilon,t]$ and $[t,t+\epsilon)$, $v$ is either a linear function with slope either $\alpha$ or $-\beta$ or a constant function with value $\vmax$, and moreover the restriction of $v$ to $(t-\epsilon,t+\epsilon)$ is concave.  Otherwise we say $v$ is \emph{locally increasable} at $t$.  If $U \subseteq (0,\infty)$ is an open set, we say $v$ is locally non-increasable on $U$ when for every $t \in U$, $v$ is locally non-increasable at $t$.
\end{df}

\begin{df}
 A trajectory $v \in V$ is \emph{locally non-decreasable} at a point $t \in (0,q)$ if there exists  $\epsilon>0$ such that on each of the intervals $(t-\epsilon,t]$ and $[t,t+\epsilon)$, $v$ is either a linear function with slope either $\alpha$ or $-\beta$ or a constant function with value $0$, and moreover the restriction of $v$ to $(t-\epsilon, t+\epsilon)$ is convex.  Otherwise we say $v$ is \emph{locally decreasable} at $t$.  If $U \subseteq (0,\infty)$ is an open set, we say $v$ is locally non-decreasable on $U$ when for every $t \in U$, $v$ is locally non-decreasable at $t$.
\end{df}

\begin{df}
For an interval $\Ic = [i_l,i_r]$ where $0 < i_l < i_r <q$, and a continuous function $v: \Ic \to [0,\vmax]$, we say $v$ is a \emph{positive tent} on $\Ic$ if and only if there exist $i_1$ and $i_2$ such that $i_l \le i_1 \le i_2 \le i_r$ and 
\begin{itemize}
\item on $(i_l,i_1)$, $v$ has slope $\alpha$,
\item on $(i_1,i_2)$, $v$ has slope $0$ and value $\vmax$, and
\item on $(i_2,i_r)$, $v$ has slope $-\beta$.
\end{itemize}
\end{df}

\begin{lm}
For any $0 < i_l < i_r <q$ and values $v_l, v_r \in [0, \vmax]$ such that $-\beta \le \frac{v_r - v_l}{i_r - i_l} \le \alpha$, there exists a unique positive tent $v: [i_l,i_r] \to [0,\vmax]$ such that $v(i_l) = v_l$ and $v(i_r) = v_r$.
\label{lem:uniquetent}
\end{lm}
\begin{proof}
$v$ is the pointwise supremum of the family of $(\alpha,\beta)$-Lipschitz functions $f: [i_l, i_r] \to [0,\vmax]$ such that $f(i_l) = v_l$ and $f(i_r) = v_r$. This family is non-empty because the affine linear function satisfying these boundary conditions is $(\alpha,\beta)$-Lipschitz. 
\end{proof}

\newcommand{\diagramScale}{0.34}
\newcommand{\SmallTextHeight}{0.6cm}

\newlength{\RowH}
\setlength{\RowH}{2.6cm}  
\colorlet{vmaxcolor}{green!40!black} 
\begin{condpics}
\begin{figure}
\begin{table}[H]
\centering
\begingroup
\setlength{\tabcolsep}{2pt}
\renewcommand{\arraystretch}{0.85}

\begin{minipage}{0.82\textwidth}
\centering
\begin{tabular}{@{}M{0.48\linewidth} M{0.48\linewidth}@{}}

\begin{minipage}[c][\RowH][c]{\linewidth}\centering
\begin{tikzpicture}[baseline=(current bounding box.center),
                    scale=\diagramScale, x=1cm, y=1cm, >=Stealth, line cap=round,
                    every node/.style={font=\scriptsize}]
  \def\il{1.6}\def\ir{8.2}\def\xmax{9.6}
  \pgfmathsetmacro{\delta}{0.25}
  \pgfmathsetmacro{\tentlift}{0.8}
  \pgfmathdeclarefunction{f}{1}{\pgfmathparse{0.9 + 0.22*(#1) + 0.6*(1 - exp(-0.5*(#1)))}}

  \draw[->] (0,0) -- (\xmax,0);
  \draw (\il,0) -- ++(0,0.06) node[above] {$i_l$};
  \draw (\ir,0) -- ++(0,0.06) node[above] {$i_r$};

  \draw[very thick, smooth] plot[domain=0:\xmax, samples=200] (\x,{f(\x)});
  \pgfmathsetmacro{\vli}{f(\il)}
  \pgfmathsetmacro{\vri}{f(\ir)}
  \fill (\il,\vli) circle (0.8pt) node[above left=2pt] {$v(i_l)$};
  \fill (\ir,\vri) circle (0.8pt) node[above right=2pt] {$v(i_r)$};

  \pgfmathsetmacro{\ym}{\vri + \tentlift}
  \pgfmathsetmacro{\cOne}{\ym - \vri}
  \pgfmathsetmacro{\cTwo}{(1+\delta)*(\ym - \vli)}
  \pgfmathsetmacro{\midx}{(\cTwo*\ir + \cOne*\il)/(\cOne + \cTwo)}

  \draw[very thick, red]  (\il,\vli) -- node[sloped, above, pos=0.55] {$\alpha$} (\midx,\ym);
  \draw[very thick, blue] (\midx,\ym) -- node[sloped, above, pos=0.45] {$\beta$}  (\ir,\vri);
\end{tikzpicture}
\end{minipage}
&
\fbox{\parbox[c][\RowH][c]{0.96\linewidth}{%
  \vspace{\SmallTextHeight}\scriptsize \textit{Case 1 (No interference from boundary conditions):
  In this case the tent consists of just $\alpha$  acceleration followed by $\beta$ deceleration.}\vspace{\SmallTextHeight}%
}}
\\[-2pt]

\begin{minipage}[c][\RowH][c]{\linewidth}\centering
\begin{tikzpicture}[baseline=(current bounding box.center),
                    scale=\diagramScale, x=1cm, y=1cm, >=Stealth, line cap=round,
                    every node/.style={font=\scriptsize}]
  \def\il{1.6}\def\ir{8.2}\def\xmax{9.6}
  \pgfmathsetmacro{\fracpos}{0.45}
  \pgfmathdeclarefunction{f}{1}{\pgfmathparse{0.9 + 0.22*(#1) + 0.6*(1 - exp(-0.5*(#1)))}}

  \draw[->] (0,0) -- (\xmax,0);
  \draw (\il,0) -- ++(0,0.06) node[above] {$i_l$};
  \draw (\ir,0) -- ++(0,0.06) node[above] {$i_r$};

  \draw[very thick, smooth] plot[domain=0:\xmax, samples=200] (\x,{f(\x)});
  \pgfmathsetmacro{\vli}{f(\il)}
  \pgfmathsetmacro{\vri}{f(\ir)}
  \fill (\il,\vli) circle (0.8pt) node[above left=2pt] {$v(i_l)$};
  \fill (\ir,\vri) circle (0.8pt) node[above right=2pt] {$v(i_r)$};

  \pgfmathsetmacro{\xmeet}{\il + \fracpos*(\ir - \il)}
  \draw[very thick, red]      (\il,\vli) -- node[sloped, above, pos=0.6] {$\alpha$} (\xmeet,\vri);
  \draw[very thick, vmaxcolor] (\xmeet,\vri) -- node[above, pos=0.55] {$v_{\max}$} (\ir,\vri);
\end{tikzpicture}
\end{minipage}
&
\fbox{\parbox[c][\RowH][c]{0.96\linewidth}{%
  \vspace{\SmallTextHeight}\scriptsize \textit{Case 2 ($\vmax$ on the right): If $v(i_r)=\vmax$ then the tent is capped by a horizontal line of $\vmax$ and has no $\beta$ component.}\vspace{\SmallTextHeight}%
}}
\\[-2pt]

\begin{minipage}[c][\RowH][c]{\linewidth}\centering
\begin{tikzpicture}[baseline=(current bounding box.center),
                    scale=\diagramScale, x=1cm, y=1cm, >=Stealth, line cap=round,
                    every node/.style={font=\scriptsize}]
  \def\il{1.6}\def\ir{8.2}\def\xmax{9.6}
  \pgfmathsetmacro{\fracpos}{0.62}
  \pgfmathdeclarefunction{f}{1}{\pgfmathparse{0.9 + 0.22*(#1) + 0.6*(1 - exp(-0.5*(#1)))}}
  \pgfmathdeclarefunction{g}{1}{\pgfmathparse{f(\xmax - (#1)) + 0.6}}

  \draw[->] (0,0) -- (\xmax,0);
  \draw (\il,0) -- ++(0,0.06) node[above] {$i_l$};
  \draw (\ir,0) -- ++(0,0.06) node[above] {$i_r$};

  \draw[very thick, smooth] plot[domain=0:\xmax, samples=200] (\x,{g(\x)});
  \pgfmathsetmacro{\vli}{g(\il)}
  \pgfmathsetmacro{\vri}{g(\ir)}
  \fill (\il,\vli) circle (0.8pt) node[above left=2pt] {$v(i_l)$};
  \fill (\ir,\vri) circle (0.8pt) node[above right=2pt] {$v(i_r)$};

  \pgfmathsetmacro{\xturn}{\il + \fracpos*(\ir - \il)}
  \draw[very thick, vmaxcolor] (\il,\vli) -- node[above, pos=0.55] {$v_{\max}$} (\xturn,\vli);
  \draw[very thick, blue]      (\xturn,\vli) -- node[sloped, above, pos=0.55] {$\beta$} (\ir,\vri);
\end{tikzpicture}
\end{minipage}
&
\fbox{\parbox[c][\RowH][c]{0.96\linewidth}{%
  \vspace{\SmallTextHeight}\scriptsize \textit{Case 3 ($\vmax$ on the left): If $v(i_l)=\vmax$ then the tent is capped by a horizontal line of $\vmax$ and has no $\alpha$ component.}\vspace{\SmallTextHeight}%
}}
\\[-2pt]

\begin{minipage}[c][\RowH][c]{\linewidth}\centering
\begin{tikzpicture}[baseline=(current bounding box.center),
                    scale=\diagramScale, x=1cm, y=1cm, >=Stealth, line cap=round,
                    every node/.style={font=\scriptsize}]
  \def\il{1.6}\def\ir{8.2}\def\xmax{9.6}
  \pgfmathsetmacro{\delta}{0.25}
  \pgfmathsetmacro{\tentliftOne}{0.8}
  \pgfmathsetmacro{\tentliftFour}{0.7}
  \pgfmathdeclarefunction{f}{1}{\pgfmathparse{0.9 + 0.22*(#1) + 0.6*(1 - exp(-0.5*(#1)))}}

  \draw[->] (0,0) -- (\xmax,0);
  \draw (\il,0) -- ++(0,0.06) node[above] {$i_l$};
  \draw (\ir,0) -- ++(0,0.06) node[above] {$i_r$};

  \draw[very thick, smooth] plot[domain=0:\xmax, samples=200] (\x,{f(\x)});
  \pgfmathsetmacro{\vli}{f(\il)}
  \pgfmathsetmacro{\vri}{f(\ir)}
  \fill (\il,\vli) circle (0.8pt) node[above left=2pt] {$v(i_l)$};
  \fill (\ir,\vri) circle (0.8pt) node[above right=2pt] {$v(i_r)$};

  \pgfmathsetmacro{\ymOne}{\vri + \tentliftOne}
  \pgfmathsetmacro{\cOne}{\ymOne - \vri}
  \pgfmathsetmacro{\cTwo}{(1+\delta)*(\ymOne - \vli)}
  \pgfmathsetmacro{\midxOne}{(\cTwo*\ir + \cOne*\il)/(\cOne + \cTwo)}
  \pgfmathsetmacro{\mBetaTarget}{(\ymOne - \vri)/(\ir - \midxOne)}

  \pgfmathsetmacro{\yapex}{\vri + \tentliftFour}
  \pgfmathsetmacro{\xBraw}{\ir - (\yapex - \vri)/(\mBetaTarget)}
  \pgfmathsetmacro{\xB}{max(\il+0.9, min(\ir-0.6, \xBraw))}
  \pgfmathsetmacro{\xA}{\il + 0.22*(\xB - \il)}

  \draw[very thick, red]      (\il,\vli) -- node[sloped, above, pos=0.55] {$\alpha$} (\xA,\yapex);
  \draw[very thick, vmaxcolor] (\xA,\yapex) -- node[above, pos=0.5] {$v_{\max}$} (\xB,\yapex);
  \draw[very thick, blue]      (\xB,\yapex) -- node[sloped, above, pos=0.55] {$\beta$} (\ir,\vri);
\end{tikzpicture}
\end{minipage}
&
\fbox{\parbox[c][\RowH][c]{0.96\linewidth}{%
  \vspace{\SmallTextHeight}\scriptsize \textit{Case 4 ($\alpha\leadsto\vmax\leadsto\beta)$: When it is not possible to take a trajectory of $\alpha\leadsto\beta$ between $v(i_l)$ and $v(i_r)$ without exceeding $\vmax$, the tent must contain a flat region at $\vmax$ between $\alpha$ and $\beta$.}\vspace{\SmallTextHeight}%
}}
\\

\end{tabular}
\end{minipage}

\endgroup
\end{table}
\caption{The four possible shapes of positive tent}
\end{figure}
\end{condpics}

\begin{df}
Suppose $0 < i_l < i_r \le q$ and let $\Ic = \{x: i_l \le x \le i_r\}$. We say that a continuous function $v: \Ic \to [0,\vmax]$ is a \emph{negative tent} on $\Ic$ if and only if there exist $i_1$ and $i_2$ such that $i_l \le i_1 \le i_2 \le i_r$ and 
\begin{itemize}
\item on $(i_l,i_1)$, $v$ has slope $-\beta$,
\item on $(i_1,i_2)$, $v$ has slope $0$ and value $0$, and
\item on $(i_2,i_r)$, $v$ has slope $\alpha$.
\end{itemize}
\end{df}

\begin{lm}
For any $0 < i_l < i_r < q$ and values $v_l, v_r \in [0, \vmax]$ such that $-\beta \le \frac{v_r - v_l}{i_r - i_l} \le \alpha$, there exists a unique negative tent $v: (i_l,i_r) \to [0,\vmax]$ such that $v(i_l) = v_l$ and $v(i_r) = v_r$.
\end{lm}
\begin{proof}
$v$ is the pointwise infimum of the family of $(\alpha,\beta)$-Lipschitz functions $f: [i_l, i_r] \to [0,\vmax]$ such that $f(i_l) = v_l$ and $f(i_r) = v_r$. This family is non-empty because the affine linear function satisfying these boundary conditions is $(\alpha,\beta)$-Lipschitz. 
\end{proof}



\colorlet{zerocolor}{orange!85!black} 
\begin{condpics}
\begin{figure}
\begin{table}[H]
\centering
\begingroup
\setlength{\tabcolsep}{2pt}
\renewcommand{\arraystretch}{0.85}

\begin{minipage}{0.82\textwidth}
\centering
\begin{tabular}{@{}M{0.48\linewidth} M{0.48\linewidth}@{}}

\begin{minipage}[c][\RowH][c]{\linewidth}\centering
\begin{tikzpicture}[baseline=(current bounding box.center),
                    scale=\diagramScale, x=1cm, y=1cm, >=Stealth, line cap=round,
                    every node/.style={font=\scriptsize}]
  \def\il{1.6}\def\ir{8.2}\def\xmax{9.6}
  \pgfmathsetmacro{\delta}{0.25}
  \pgfmathsetmacro{\tentdrop}{0.8} 
  \pgfmathdeclarefunction{f}{1}{\pgfmathparse{0.9 + 0.22*(#1) + 0.6*(1 - exp(-0.5*(#1)))}}
  \pgfmathdeclarefunction{g}{1}{\pgfmathparse{f(\xmax - (#1))}} 

  \draw[->] (0,0) -- (\xmax,0);
  \draw (\il,0) -- ++(0,0.06) node[above] {$i_l$};
  \draw (\ir,0) -- ++(0,0.06) node[above] {$i_r$};

  \draw[very thick, smooth] plot[domain=0:\xmax, samples=200] (\x,{g(\x)});
  \pgfmathsetmacro{\vli}{g(\il)}
  \pgfmathsetmacro{\vri}{g(\ir)}
  \fill (\il,\vli) circle (0.8pt) node[above left=2pt] {$v(i_l)$};
  \fill (\ir,\vri) circle (0.8pt) node[above right=2pt] {$v(i_r)$};

  \pgfmathsetmacro{\yapex}{\vri - \tentdrop}
  \pgfmathsetmacro{\cOne}{\vri - \yapex}
  \pgfmathsetmacro{\cTwo}{(1+\delta)*(\vli - \yapex)}
  \pgfmathsetmacro{\midx}{(\cTwo*\ir + \cOne*\il)/(\cOne + \cTwo)}

  \draw[very thick, blue] (\il,\vli) -- 
    node[sloped, above, pos=0.55, xshift=-6pt, yshift=-12pt] {$\beta$} 
    (\midx,\yapex);
  \draw[very thick, red]  (\midx,\yapex) -- 
    node[sloped, above, pos=0.45, yshift=-2pt] {$\alpha$} 
    (\ir,\vri);
\end{tikzpicture}
\end{minipage}
&
\fbox{\parbox[c][\RowH][c]{0.96\linewidth}{%
  \vspace{\SmallTextHeight}\scriptsize \textit{Case 1 (No interference from boundary conditions):
  In this case the tent consists of just $\beta$  deceleration followed by $\alpha$ acceleration.}\vspace{\SmallTextHeight}%
}}
\\[-2pt]

\begin{minipage}[c][\RowH][c]{\linewidth}\centering
\begin{tikzpicture}[baseline=(current bounding box.center),
                    scale=\diagramScale, x=1cm, y=1cm, >=Stealth, line cap=round,
                    every node/.style={font=\scriptsize}]
  \def\il{1.6}\def\ir{8.2}\def\xmax{9.6}
  \pgfmathsetmacro{\fracpos}{0.45}
  \pgfmathdeclarefunction{f}{1}{\pgfmathparse{0.9 + 0.22*(#1) + 0.6*(1 - exp(-0.5*(#1)))}}
  \pgfmathdeclarefunction{g}{1}{\pgfmathparse{f(\xmax - (#1))}} 

  \draw[->] (0,0) -- (\xmax,0);
  \draw (\il,0) -- ++(0,0.06) node[above] {$i_l$};
  \draw (\ir,0) -- ++(0,0.06) node[above] {$i_r$};

  \draw[very thick, smooth] plot[domain=0:\xmax, samples=200] (\x,{g(\x)});
  \pgfmathsetmacro{\vli}{g(\il)}
  \pgfmathsetmacro{\vri}{g(\ir)}
  \fill (\il,\vli) circle (0.8pt) node[above left=2pt] {$v(i_l)$};
  \fill (\ir,\vri) circle (0.8pt) node[above right=2pt] {$v(i_r)$};

  \pgfmathsetmacro{\xmeet}{\il + \fracpos*(\ir - \il)}
  \pgfmathsetmacro{\yzero}{\vri} 

  \draw[very thick, blue]      (\il,\vli) -- 
    node[sloped, above, pos=0.6, xshift=-4pt, yshift=-14pt] {$\beta$} 
    (\xmeet,\yzero);
  \draw[very thick, zerocolor] (\xmeet,\yzero) -- 
    node[above, pos=0.55, xshift=-4pt, yshift=-12pt] {$0$} 
    (\ir,\yzero);
\end{tikzpicture}
\end{minipage}
&
\fbox{\parbox[c][\RowH][c]{0.96\linewidth}{%
  \vspace{\SmallTextHeight}\scriptsize \textit{Case 2 ($0$ on the right): If $v(i_r)=0$ then the tent is capped by a horizontal line of $v=0$ and has no $\alpha$ component.}\vspace{\SmallTextHeight}%
}}
\\[-2pt]

\begin{minipage}[c][\RowH][c]{\linewidth}\centering
\begin{tikzpicture}[baseline=(current bounding box.center),
                    scale=\diagramScale, x=1cm, y=1cm, >=Stealth, line cap=round,
                    every node/.style={font=\scriptsize}]
  \def\il{1.6}\def\ir{8.2}\def\xmax{9.6}
  \pgfmathsetmacro{\fracpos}{0.62}
  \pgfmathdeclarefunction{f}{1}{\pgfmathparse{0.9 + 0.22*(#1) + 0.6*(1 - exp(-0.5*(#1)))}}

  \draw[->] (0,0) -- (\xmax,0);
  \draw (\il,0) -- ++(0,0.06) node[above] {$i_l$};
  \draw (\ir,0) -- ++(0,0.06) node[above] {$i_r$};

  \draw[very thick, smooth] plot[domain=0:\xmax, samples=200] (\x,{f(\x)});
  \pgfmathsetmacro{\vli}{f(\il)}
  \pgfmathsetmacro{\vri}{f(\ir)}
  \fill (\il,\vli) circle (0.8pt) node[above left=2pt] {$v(i_l)$};
  \fill (\ir,\vri) circle (0.8pt) node[above right=2pt] {$v(i_r)$};

  \pgfmathsetmacro{\yzero}{\vli}
  \pgfmathsetmacro{\xturn}{\il + \fracpos*(\ir - \il)}

  \draw[very thick, zerocolor] (\il,\yzero) -- 
    node[above, pos=0.55, yshift=-12pt] {$0$} 
    (\xturn,\yzero);
  \draw[very thick, red]       (\xturn,\yzero) -- 
    node[sloped, above, pos=0.55, xshift=1pt, yshift=-12pt] {$\alpha$} 
    (\ir,\vri);
\end{tikzpicture}
\end{minipage}
&
\fbox{\parbox[c][\RowH][c]{0.96\linewidth}{%
  \vspace{\SmallTextHeight}\scriptsize \textit{Case 3 ($0$ on the left): If $v(i_l)=0$ then the tent is capped by a horizontal line of $v=0$ and has no $\beta$ component.}\vspace{\SmallTextHeight}%
}}
\\[-2pt]

\begin{minipage}[c][\RowH][c]{\linewidth}\centering
\begin{tikzpicture}[baseline=(current bounding box.center),
                    scale=\diagramScale, x=1cm, y=1cm, >=Stealth, line cap=round,
                    every node/.style={font=\scriptsize}]
  \def\il{1.6}\def\ir{8.2}\def\xmax{9.6}
  \pgfmathsetmacro{\tentdropFour}{0.7}
  \pgfmathdeclarefunction{f}{1}{\pgfmathparse{0.9 + 0.22*(#1) + 0.6*(1 - exp(-0.5*(#1)))}}
  \pgfmathdeclarefunction{g}{1}{\pgfmathparse{f(\xmax - (#1))}} 

  \draw[->] (0,0) -- (\xmax,0);
  \draw (\il,0) -- ++(0,0.06) node[above] {$i_l$};
  \draw (\ir,0) -- ++(0,0.06) node[above] {$i_r$};

  \draw[very thick, smooth] plot[domain=0:\xmax, samples=200] (\x,{g(\x)});
  \pgfmathsetmacro{\vli}{g(\il)}
  \pgfmathsetmacro{\vri}{g(\ir)}
  \fill (\il,\vli) circle (0.8pt) node[above left=2pt] {$v(i_l)$};
  \fill (\ir,\vri) circle (0.8pt) node[above right=2pt] {$v(i_r)$};

  \pgfmathsetmacro{\yflat}{\vri - \tentdropFour}
  \pgfmathsetmacro{\xA}{\il + 0.22*(\ir - \il)}
  \pgfmathsetmacro{\xB}{\il + 0.72*(\ir - \il)}

  \draw[very thick, blue]      (\il,\vli) -- node[sloped, above, pos=0.55] {$\beta$} (\xA,\yflat);
  \draw[very thick, zerocolor] (\xA,\yflat) -- node[above, pos=0.5] {$0$} (\xB,\yflat);
  \draw[very thick, red]       (\xB,\yflat) -- 
    node[sloped, above, pos=0.55, xshift = -2, yshift=-2pt] {$\alpha$} 
    (\ir,\vri);
\end{tikzpicture}
\end{minipage}
&
\fbox{\parbox[c][\RowH][c]{0.96\linewidth}{%
  \vspace{\SmallTextHeight}\scriptsize \textit{Case 4 ($\alpha\leadsto0\leadsto\beta)$: When it is not possible to take a trajectory of $\beta\leadsto\alpha$ between $v(i_l)$ and $v(i_r)$ without hitting $v=0$, the tent must contain a flat region at $0$ between $\beta$ and $\alpha$.}\vspace{\SmallTextHeight}%
}}
\\

\end{tabular}
\end{minipage}

\endgroup
\end{table}
\caption{The four possible shapes of negative tents}
\end{figure}
\end{condpics}

\begin{lm}
If $v$ is locally non-increasable on an interval $\Ic \subseteq (0,q)$ then the restriction of $v$ to $\Ic$ is a positive tent.
\label{lm:resofvispos}
\end{lm}
\begin{proof}
Consider an arbitrary closed sub-interval $\Ic$. For every neighbourhood around some $t\in\Ic$, $v$ is concave and hence $v$ is concave on all of $\Ic$. At each point, on each side, \(v\) is either linear with slope \(\alpha\) or \(-\beta\), or constant at level \(v_{\max}\).
Thus the one-sided slopes lie in \(\{\alpha,0,-\beta\}\), with slope \(0\) occurring only on a plateau where \(v\equiv v_{\max}\). Concavity implies one-sided slopes are non-increasing as we move to the right, and at each point the left slope is no less than the right slope. Since allowed slopes are ordered \(\alpha\ge 0\ge -\beta\) and are non-increasing, the slope can drop at most once
from \(\alpha\) to \(0\) and at most once from \(0\) to \(-\beta\).
Hence there exist \(i_1\le i_2\) such that:
\begin{itemize}
  \item on \((i_\ell,i_1)\) the slope is \(\alpha\) (so \(v\) is affine with slope \(\alpha\));
  \item on \((i_1,i_2)\) the slope is \(0\), and \(v\equiv v_{\max}\);
  \item on \((i_2,i_r)\) the slope is \(-\beta\) (so \(v\) is affine with slope \(-\beta\)).
\end{itemize}
Degenerate cases (\(i_1=i_\ell\), \(i_2=i_r\), or \(i_1=i_2\)) are allowed. Therefore \(v|_\Ic\) is a positive tent. 
\end{proof}

\begin{lm}
 If $v$ is not locally non-increasable on some non-empty open set $U$, then there exists a non-empty closed interval $\Ic \subseteq U$ and an $(\alpha,\beta)$-Lipschitz function $v': [0,q) \to [0,\vmax]$ such that $v'(t) = v(t)$ for all $t \notin \Ic$ and the restriction of $v'$ to $\Ic$ is a positive tent and $v'(t) \ge v(t)$ on $\Ic$ and for at least one $t \in \Ic$, $v'(t) > v(t)$. 
 \label{lm:vispos}
\end{lm}

\begin{proof}
Pick $t_0\in U$ where $v$ is not locally non-increasable. Pick a closed interval $\Ic=[i_l,i_r]\subset U$
so that $t_0\in(i_l,i_r)$. Then $v|_{\Ic}$ is not a positive tent, since if it were a positive tent then $v$ would be locally non-increasable at $t_0$. Let $\tau^+$ be the unique positive
tent on $\Ic$ with $\tau^+(i_l)=v(i_l)$ and $\tau^+(i_r)=v(i_r)$, as described in Lemma~\ref{lem:uniquetent}. By the supremum characterisation in Lemma~\ref{lem:uniquetent},
$\tau^+$ is the pointwise supremum of all $(\alpha,\beta)$-Lipschitz functions on $\Ic$ with these endpoint values; since
$v\in V$ is $(\alpha,\beta)$-Lipschitz, we have $v\le \tau^+$ on $\Ic$. If $v\equiv \tau^+$ on $\Ic$ then $v|_{\Ic}$ would be a
positive tent, contradicting the choice of $\Ic$; hence $v<\tau^+$ somewhere in $(i_l,i_r)$. Define
\[
v'(t):=\begin{cases}
\tau^+(t),& t\in[i_l,i_r],\\
v(t),& t\notin[i_l,i_r].
\end{cases}
\]
Then $v'$ is $(\alpha,\beta)$-Lipschitz because both pieces are, and they agree at $i_l,i_r$. Also, $v'$ takes values in $[0,v_{\max}]$,
equals $v$ off $\Ic$, and on $\Ic$ is a positive tent with $v'\ge v$ and $v'>v$ somewhere in $\Ic$.
\end{proof}

\begin{lm}
If $v$ is locally non-decreasable on an interval $\Ic \subseteq (0,q)$ then the restriction of $v$ to $\Ic$ is a negative tent.
\end{lm}
\begin{proof}
The proof of this lemma is almost identical to the proof of lemma~\ref{lm:resofvispos}, simply use convexity to show that the restriction of $v$ to $\Ic$ is a negative tent.
\end{proof}

\begin{lm}
 If $v$ is not locally non-decreasable on some non-empty open set $U$, then there exists a non-empty closed interval $\Ic \subseteq U$ and an $(\alpha,\beta)$-Lipschitz function $v': [0,q) \to [0,\vmax]$ such that $v'(t) = v(t)$ for all $t \notin \Ic$ and the restriction of $v'$ to $\Ic$ is a negative tent and $v'(t) \le v(t)$ on $\Ic$ and for at least one $t \in \Ic$, $v'(t) < v(t)$.
\label{lm:visneg}
\end{lm}
\begin{proof}
The proof of this lemma is also very similar to that of lemma~\ref{lm:vispos}, one can follow the exact same argument with negative tents instead of positive ones.
\end{proof}

\begin{lm}\label{lem:tents_old}
Suppose we have a trajectory $v\in V$  and an Euler--Lagrange curve $\vel$ such that there are two non-empty open subintervals of $(0,q)$, $\mathcal{I} = (i_l, i_r)$ and $\mathcal{J} = (j_l,j_r)$, where $v(t) <\vel(t)$ in $\mathcal{I}$ and $v(t) > \vel(t)$ in $\mathcal{J}$. If $v$ is neither locally non-increasable on $\mathcal{I}$ nor locally non-decreasable on $\mathcal{J}$, then $v$ is not optimal. 
\end{lm}
\begin{proof}
We will construct a perturbation $v'\in V$ of $v$, distinct from $v$, such that $v(t) = v'(t)$ for $t$ outside $\mathcal{I} \cup \mathcal{J}$, $v(t) \le v'(t) \le\vel(t)$ for $ t \in \mathcal{I}$, and $\vel(t) \le v'(t) \le v(t)$ for $t \in \mathcal{J}$. We will then apply Lemma~\ref{lem:movetowardsEL}. 

As we did in the proofs of Lemmas~\ref{lm:vispos} and~\ref{lm:visneg}, let $\tau^+$ be the unique positive tent on $\mathcal{I}$ such that $\tau^+(i_l) =v(i_l)$ and $\tau^+(i_r) = v(i_r)$, and let $\tau^-$ be the unique negative tent on $\mathcal{J}$ such that $\tau^-(j_l) =v(j_l)$ and $\tau^-(i_r) = v(i_r)$.  
Define interpolation constants $\gamma_\mathcal{I},\gamma_\mathcal{J}\in[0,1]$, and define a perturbed trajectory $v'$ by
\[
v'(t)=v(t)+\mathbbm{1}(t\in\mathcal{I})\cdot\gamma_\mathcal{I}[\tau^{+}(t)-v(t)]+\mathbbm{1}(t\in\mathcal{J})\cdot\gamma_\mathcal{J}[\tau^{-}(t)-v(t)].
\]
The interpolation constants $\gamma_\mathcal{I}$ and $\gamma_\mathcal{J}$ must now be chosen to ensure the integral of $v'$ is still equal to $d$. We also choose these small enough such that \(0\le v'(t)\le\vmax\) for all \(t\in(0,\,q)\) and it stays on the same side of \(\vel\) as the unperturbed \(v\). This is possible because of the assumptions that $v$ is neither locally non-increasable on $\Ic$ nor locally non-decreasable on $\Jc$ and that $v < \vel$ on $\Ic$ and $v > \vel$ on $\Jc$. We begin by setting the integral of $v'$ equal to d:
\[
d=\int_0^qv(t)+\mathbbm{1}(t\in\mathcal{I})\cdot\gamma_\mathcal{I}[\tau^{+}(t)-v(t)]+\mathbbm{1}(t\in\mathcal{J})\cdot\gamma_\mathcal{J}[\tau^{-}(t)-v(t)]\di t.
\]
The integral of $v$ is just $d$ since we know that $v\in V$, therefore 
\[
d = d + \gamma_\mathcal{I}\int_\mathcal{I}[\tau^{+}(t)-v(t)]\di t+\gamma_\mathcal{J}\int_\mathcal{J}[\tau^{-}(t)-v(t)]\di t.
\]
Hence we must choose the interpolation constants so that
\[
\gamma_\mathcal{J}=\gamma_\mathcal{I}\cdot\frac{\int_\mathcal{I}[\tau^{+}(t)-v(t)]\di t}{\int_\mathcal{J}[v(t)-\tau^{-}(t)]\di t}. 
\]
Therefore under these conditions $v'\in V$. 

\begin{condpics}
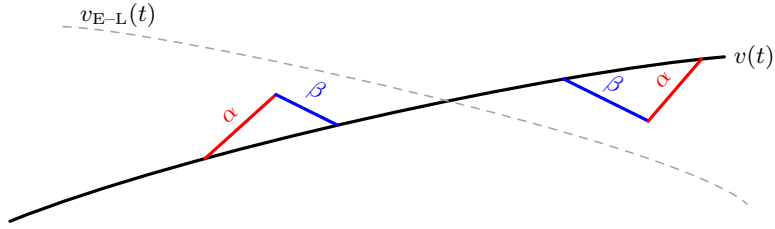
\begin{figure}[H]
\centering
\begin{tikzpicture}[line cap=round, line join=round, font=\small, x=1cm, y=1cm]

  \draw[name path=vt, very thick, black]
    (0.1,0.12) .. controls (2.4,1.05) and (7.2,2.10) .. (9.5,2.30);
  \node[anchor=west] at (9.5,2.30) {$v(t)$};

  \draw[semithick, draw=gray!70, dashed]
    (0.8,2.70) .. controls (2.0,2.68) and (8.8,1.20) .. (9.8,0.35);
  \node[anchor=west] at (0.9,2.85) {$\vel(t)$};

  \coordinate (Lapex) at (3.60,1.80); 
  \path[name path=LalphaRay] (Lapex) -- ($(Lapex)+(-2.2,-2.0)$); 
  \path[name intersections={of=vt and LalphaRay, by=Lstart}];
  \path[name path=LbetaRay]  (Lapex) -- ($(Lapex)+( 2.2,-1.1)$); 
  \path[name intersections={of=vt and LbetaRay,  by=Lend}];

  \draw[very thick, red]  (Lstart) -- (Lapex)
    node[midway, above, sloped] {$\alpha$};
  \draw[very thick, blue] (Lapex)  -- (Lend)
    node[midway, above, sloped] {$\beta$};

  \def\Rxr{9.20} 
  \coordinate (Rapex) at (8.50,1.45);

  \path[name path=RbetaRay] (Rapex) -- ($(Rapex)+(-4,2)$);
  \path[name intersections={of=vt and RbetaRay, by=Rstart}];

  \path[name path=RvR] (\Rxr,-1) -- (\Rxr,3);
  \path[name intersections={of=vt and RvR, by=Rend}];

  \draw[very thick, blue] (Rstart) -- (Rapex)
    node[midway, above, sloped, yshift=-1.5pt] {$\beta$}; 
  \draw[very thick, red]  (Rapex)  -- (Rend)
    node[midway, above, sloped] {$\alpha$};

\end{tikzpicture}
\caption{A candidate trajectory $v(t)$ intersecting an Euler--Lagrange curve. Tents $\tau^{+}$ and $\tau^{-}$ bring the trajectory closer to the Euler--Lagrange curve. An interpolated curve between these tents and $v(t)$ will have a lower action integral.}
\end{figure}
\end{condpics}
Now we can apply Lemma~\ref{interpolatedv} to see that $v$ is not optimal.
\end{proof}

Suppose $v \in V$ is an optimal trajectory. For any $p \in \mathbb{R}$, let
\[
\begin{aligned}
L_{p} &= \{\,t \in (0,q):\; P_B(t,v(t))<p\,\},\\
U_{p} &= \{\,t \in (0,q):\; P_B(t,v(t))>p\,\} .
\end{aligned}
\]

\begin{cor}\label{cor:upperorlower}
Suppose $v \in V$ is an optimal trajectory. Then for all $p \in \mathbb{R}$, either $v$ is locally non-increasable on $L_p$ or $v$ is locally non-decreasable on $U_p$. If p is too  large or small, then $U_p$ or $L_p$ maybe be empty. For such values of $p$ this statement has no content since one of the sets will be empty. 
\end{cor}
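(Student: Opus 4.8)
The plan is to prove the asserted disjunction by contradiction, reducing everything to a single application of Lemma~\ref{lem:tents_old}. The crucial observation is that, for each fixed $p$, the sets $L_p$ and $U_p$ are exactly the regions lying strictly below and strictly above one particular Euler--Lagrange curve. To make this precise, fix $p \in \mathbb{R}$ and consider the isobar $\{(t,C) : P_B(t,C) = p\}$. Solving $P_B(t,C) = p$ for $C$ using the definition~\eqref{eq:pdef} yields precisely the Euler--Lagrange curve~\eqref{eulerlagrangegeneral} with the constant $B$ replaced by $B - p$; call this curve $\vel_p$, which is therefore a genuine member of the family to which Lemmas~\ref{lem:movetowardsEL} and~\ref{lem:tents_old} apply. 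Since $\partial P_B/\partial C = f(t) > 0$ on $[0,q)$ (using strict positivity of $f$), the map $C \mapsto P_B(t,C)$ is strictly increasing for each $t$, so
\[
L_p = \{\,t \in (0,q) : v(t) < \vel_p(t)\,\}, \qquad U_p = \{\,t \in (0,q) : v(t) > \vel_p(t)\,\}.
\]
When $f$ is continuous the function $t \mapsto P_B(t,v(t))$ is continuous, and hence $L_p$ and $U_p$ are open.

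Next I would argue by contradiction. If either $L_p$ or $U_p$ is empty then the claimed disjunction holds vacuously, so assume both are non-empty and suppose, for contradiction, that $v$ is neither locally non-increasable on $L_p$ nor locally non-decreasable on $U_p$. Unwinding the definitions, there is a point $t_1 \in L_p$ at which $v$ is locally increasable and a point $t_2 \in U_p$ at which $v$ is locally decreasable. Because $L_p$ is open I can choose an open interval $\mathcal{I} = (i_l,i_r) \subseteq L_p$ with $t_1 \in \mathcal{I}$ and $0 < i_l < i_r < q$; then $v < \vel_p$ throughout $\mathcal{I}$, and $v$ fails to be locally non-increasable on $\mathcal{I}$ because it already fails at the point $t_1 \in \mathcal{I}$. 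Symmetrically I obtain an open interval $\mathcal{J} = (j_l,j_r) \subseteq U_p$ containing $t_2$, on which $v > \vel_p$ and on which $v$ is not locally non-decreasable. Since $L_p \cap U_p = \emptyset$, the intervals $\mathcal{I}$ and $\mathcal{J}$ are automatically disjoint.

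These intervals $\mathcal{I}$, $\mathcal{J}$, together with the single curve $\vel_p$, satisfy exactly the hypotheses of Lemma~\ref{lem:tents_old}: we have $v < \vel_p$ on $\mathcal{I}$, $v > \vel_p$ on $\mathcal{J}$, and $v$ is neither locally non-increasable on $\mathcal{I}$ nor locally non-decreasable on $\mathcal{J}$. That lemma therefore asserts that $v$ is not optimal, contradicting our standing assumption that $v \in V$ is an optimal trajectory. Hence at least one of the two local properties must hold, which is the content of the corollary.

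I expect the main obstacle to be the openness bookkeeping rather than the core argument. One must ensure that $L_p$ and $U_p$ are open --- both so that the phrase ``locally non-increasable on $L_p$'' is well-posed and so that genuine open subintervals exist around $t_1$ and $t_2$ --- which relies on continuity of $t \mapsto P_B(t,v(t))$ and hence on continuity of $f$ (as holds in the Uniform and Exponential cases of interest). For a merely non-increasing $f$ with jump discontinuities one would instead pass to the interiors of $L_p$ and $U_p$, observing that local (non-)increasability is a condition imposed only at interior points, so nothing is lost. Once this point is handled, the identification of $L_p$ and $U_p$ as the sub- and super-level regions of the isobar $\vel_p$ is immediate from strict monotonicity of the pressure in $C$, and the entire statement collapses to one invocation of Lemma~\ref{lem:tents_old}.
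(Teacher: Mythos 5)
Your proof is correct and is essentially the paper's own argument: the paper's proof consists of the single line ``This is a simple corollary of Lemma~\ref{lem:tents_old}'', and your identification of $L_p$ and $U_p$ as the strict sub- and super-level regions of the isobar $\vel_p$ (obtained from \eqref{eulerlagrangegeneral} with $B$ replaced by $B-p$, using $\partial P_B/\partial C = f(t) > 0$), followed by one application of that lemma via a contradiction at points of local increasability/decreasability, is precisely the reasoning the paper leaves implicit. Your caveat about openness of $L_p$ and $U_p$ when $f$ has jumps is a detail the paper also glosses over, and your resolution (continuity of $f$ in the cases of interest, or passing to interiors) is adequate.
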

\begin{proof}
This is a simple corollary of Lemma~\ref{lem:tents_old}.
\end{proof}
\newcommand{\upperprop}{\hyperref[prop:upper]{Upper property}}
\newcommand{\lowerprop}{\hyperref[prop:lower]{Lower property}}

\begin{pr}
 There exists a pressure value $p_0$ such that $v$ is locally non-increasable on $L_{p_0}$ and locally non-decreasable on $U_{p_0}$. 
\label{prop:upperandlower}
\end{pr}
\begin{proof}
Let
\[
\begin{aligned}
\Ac &= \{\, p : \text{ $v$ is locally non-increasable on $L_p$}  \,\},\\
\Bc &= \{\, p : \text{ $v$ is locally non-decreasable on $U_p$} \,\}.
\end{aligned}
\]
By Corollary~\ref{cor:upperorlower}, $\Ac \cup \Bc =\mathbb{R}$. For any $p\in \Ac$, if $p'<p$ then  $L_{p'} \subseteq L_p$ so $p' \in \Ac$. That is, $\Ac$ is a down-set. Similarly, $\Bc$ is an up-set. We claim that $\Ac$ and $\Bc$ are closed sets. Indeed, suppose for a contradiction that $\Ac$ is not closed; then $\Ac$ must be an interval of the form $(-\infty, p)$ for some $p \in \mathbb{R}$. Then $L_p = \bigcup_{p' < p} L_{p'}$ since for every $t \in L_p$, we may choose $p'$ such that $ P_\Bc(t,v(t)) < p' < p$, then $t \in L_{p'}$, and since $v$ is locally non-decreasable on $L_{p'}$, we see that $v$ is locally non-decreasable at $t$. Hence $v$ is locally non-decreasable on $L_p$, which is to say that $p \in \Ac$, a contradiction. Similarly, $\Bc$ is closed.
By definition, $v$ is locally non-increasable on the empty set. Recalling the pressure bounds~\eqref{eq:boundedp}, if we take $p = -\vmax K + B$ then $L_p = \emptyset$, so $-\vmax K + B \in \Ac$. That is, $\Ac$ is non-empty.  Similarly, $\Bc$ is non-empty, since $\al+B \in \Bc$.

It now follows that $\Ac \cap \Bc \neq \emptyset$. We may take any $p_0 \in \Ac \cap \Bc$, then $v$ is locally non-increasable on $L_{p_0}$ and locally non-decreasable on $U_{p_0}$, as required.
\end{proof}

We can now combine these lemmas to show that the optimal trajectory must be one of the seventeen choices outlined in Theorem~\ref{thm:proofofcombinatorics}. At this point we will define a useful new quantity $v_\beta$, the point at which the deceleration on an Euler--Lagrange curve becomes greater than $\beta$, so that the optimal curve can no longer follow Euler--Lagrange without violating boundary conditions. 

\begin{proof}[Proof of Theorem~\ref{thm:proofofcombinatorics}] Let $v^* \in V$ be an optimal trajectory.
Apply Proposition \ref{prop:upperandlower} to fix a pressure value $p_0$. Recall that Euler--Lagrange curves are the isobars of pressure and therefore have constant pressure value. Proposition \ref{prop:upperandlower} implies that $v^*$ consists of segments of this particular Euler--Lagrange curve, or pieces below this curve that are locally non-increasable, or pieces above that are locally non-decreasable. By lemmas \ref{lm:vispos} and \ref{lm:visneg}, these pieces must be upper or lower tents, respectively. Notice though that any of these three choices might be missing from $v^*$.

Therefore $v^*$ must be a concatenation of pieces from the finite set $\{\alpha,\beta,\text{E--L},\vmax,0\}$.  (Here E--L stands for Euler--Lagrange.) We will perform a case analysis to construct an exhaustive list of possible optimal trajectories. We begin by supposing that the intersection between $v^*$ and the Euler--Lagrange curve of pressure \(p_0\) is non-empty. This could happen via a positive length-segment of Euler--Lagrange curve or the optimal path just crossing it in isolated points. In this latter case we consider such isolated points as length-zero segments. What could possibly be attached to the left of such a segment? One option is that the leftmost point is simply the start time $0$, or there could be some more pieces of a different class of trajectory that come before this leftmost point. If the piece of trajectory we add on the left is always greater than the Euler--Lagrange curve, then we know it must be a negative tent by Lemma~\ref{lm:visneg}. Of the possible negative tents, only one could be added to the leftmost point of a segment of Euler--Lagrange curve and remain greater than it, and that is just a single piece of $\beta$ deceleration. Now if the piece of trajectory we add on the left is less than the Euler--Lagrange curve, it must be a positive tent by Lemma~\ref{lm:vispos}. Of the positive tents, if we wish to remain less than the Euler--Lagrange curve, we can append either
\begin{itemize}
\item $\vmax$, $\alpha \leadsto \vmax$ or just $\alpha$, if the leftmost point of Euler--Lagrange is at velocity \(v>v_\be\), or
\item any of the upper tents if the leftmost point is at velocity \(v\le v_\be\).
\end{itemize}
Now we proceed with the right side of the Euler--Lagrange segment. Firstly, we have the trivial case that there are no adjoining trajectories and the Euler--Lagrange segment just ends at $v=0$. If there is an adjoining segment on the right, and that segment is greater than the Euler--Lagrange curve then it must be a negative tent as before. To stay above the Euler--Lagrange curve, this negative tent can be either
\begin{itemize}
\item $v=0$, if the velocity at this rightmost point is \(v\ge v_\be\) (i.e.\ \(v_\be<0\)), or
\item \(\be \leadsto 0\), if the velocity at this rightmost point is $v<v_\beta$. This is because here the Euler--Lagrange curve is steeper than $\beta$ deceleration.
\end{itemize}
By putting together these restrictions on a trajectory some simple case analysis produces the list in Theorem~\ref{thm:proofofcombinatorics}. 

\end{proof}

Before we move onto specific cases of distributions we will see that the Lagrange multiplier simply allowed us to shift the pressure values by an arbitrary constant under certain conditions.

\begin{lm}
Varying \(B\) shifts the Euler--Lagrange curves left and right if and only if the time for the light to go green, \(T\), has either the Uniform\((0,\,q)\) or an Exponential distribution.
\label{lm:Bcondition}
\end{lm}

\begin{proof}
We write \eqref{eulerlagrangegeneral} as a function of t and B, 
\[
v =\vel^B(t,B)= \vmax-\frac B{f(t)}-\al\frac{F(t)}{f(t)}.
\]
Computing partial derivatives we have
\[
\begin{aligned}
\frac{\partial v}{\partial B} &= -\frac{1}{f(t)}, \\[6pt]
\frac{\partial v}{\partial t} &= \frac{B \dot f(t) - \alpha \bigl[\dot F(t)f(t) - F(t)\dot f(t)\bigr]}{f(t)^2}.
\end{aligned}
\]
By imposing the condition that $\frac{\partial v}{\partial B}\propto \frac{\partial v}{\partial t}$ and rewriting everything in terms of $F(t)$ and its derivatives we arrive at the following ODE:
\[
c\dot{F}=B\ddot{F}-\alpha\dot{F}^2+\alpha F\ddot{F},
\]
where c is a constant.
Setting $y = \dot{F}$ gives $\ddot{F} = y\,\frac{dy}{dF}$, so that $c = (B + \alpha F)\frac{dy}{dF} - \alpha y$.
Solving the linear equation yields
\[
y = C_1(B + \alpha F) - \frac{c}{\alpha}.
\]
Hence, $\dot{F} = D F + A$, with $D = \alpha C_1$ and $A = C_1 B - \frac{c}{\alpha}$.
One option here is \(C_1=0\), in which case \(y=\dot F\) is a constant and we arrive at the Uniform distribution.
Otherwise, the solution is
$F(t) = \text{const.}\cdot e^{D t} - \frac{A}{D}$. Applying $F(0)=0$ and $F(t)\to1$ as $t\to q$ gives
\[
F(t) = 1 - \exp\!\left( \frac{c}{B + \alpha} t \right),
\quad \frac{c}{B + \alpha} < 0,
\]
ensuring $\dot{F} \ge 0, \, \ddot{F} \le 0$.
\end{proof}

Motivated by this, we will now investigate the Uniform and Exponential distributions, which are clearly both special cases of Lemma~\ref{lm:Bcondition}. The Euler--Lagrange curves are given by the following isobars for an Exponential and Uniform Distribution:
\begin{equation}
 \vel(t)=\vmax-\frac B{f(t)}-\al\frac{F(t)}{f(t)}=\left\{
  \begin{aligned}
   &\vmax+\frac\al\la-\Bigl(\frac{\al+B}{\la}\Bigr)e^{\la t},&&\text{for }T\sim\text{Exp}(\la),\\
   &\vmax-\al t-qB,&&\text{for }T\sim\text U(0,\,q).
  \end{aligned}
 \right.\label{eq:velexpression}
\end{equation}
For these examples, varying \(B\) just pushes the Euler--Lagrange curve left or right, thanks to Lemma~\ref{lm:Bcondition}. In other cases this could be more complicated. If the Euler--Lagrange curve starts at \(v_0=v(0)\) then
\[
 v_0=\vmax-\frac B{f(0)},\qquad B=(\vmax-v_0)f(0)
\]
so
\[
 \vel(t)=\vmax-(\vmax-v_0)\frac{f(0)}{f(t)}-\al\frac{F(t)}{f(t)}=\vmax-\frac{(\vmax-v_0)f(0)+\al F(t)}{f(t)},
\]
which is still non-increasing in \(t\), since \(f\) is non-decreasing.

\section{The Uniform Case}

Let us now assume that \(T\sim\text{Uniform}(0,\,q)\) for some \(q>0\), hence \(f(t)=\frac1q\) on this interval. Then ODE \eqref{eq:masterode} simply reads \(\dot v=-\al\). That is, the Euler--Lagrange curves represent deceleration at constant rate \(\al\). When \(\be<\al\), this solution cannot be valid. However, this is unrealistic since all road vehicles can brake at least as fast as they can accelerate. Hence we disregard this case and assume that deceleration at rate \(\al\) is always possible.

There are many trajectories that finish with a phase of deceleration at rate \(\al\) ending at time \(q\), and we need to pick one that satisfies $x(0) = 0$, $\dot x = v$ where $v \in V$. 

\subsection{Case analysis of trajectories}
Since $\dot v =-\al\ge-\be$ on all Euler Lagrange curves, we know that the boxed trajectories in \ref{thm:proofofcombinatorics} are not valid for the uniform case since Euler Lagrange curves are clearly never steeper than $\beta$. We also have a finite $q$ in the uniform case, so trajectories which do not end stationary are valid, such as $\al$, $\alpha \leadsto \vmax$ and constant $\vmax$. Recalling the notion of pressure introduced in Section \ref{pressureinterpretation}, we can imagine a tank of water filling according to level lines prescribed by shifted Euler Lagrange curves of the form $\dot v =-\al$. We will use Figure \ref{fig:uniformpressure} to inform our case analysis of optimal trajectories. It is helpful to define four critical time points. Let $t_1$ be the time that an instant deceleration from $v_0$ becomes stationary, $\frac{v_0}{\beta}$. We then define $t_2$ to be the time it takes to decelerate from $v_0$ to stationary on an Euler--Lagrange curve, $\frac{v_0}{\alpha}$.  Next, $t_3$ is defined as the time at which instant acceleration from $v_0$ at rate $\alpha$ reaches velocity $\vmax$, $\frac{\vmax-v_0}{\alpha}$. Finally, $t_4$ is equal to $t_3$ added to the time it takes to decelerate from $\vmax$ to $0$ at rate $\alpha$, $t_3 + \frac{\vmax}{\alpha}$. We know that $t_1\leq t_2\leq t_4$ and $t_3\leq t_4$ always, but $q$ can take take any positive value. This leaves us with 15 possible orderings of the inequality once we include $q$, depending on the values of $\alpha,\beta,v_0$ and $\vmax$. We illustrate one of these cases in Figure \ref{fig:uniformpressure}. A complete case by case analysis analysis here would be very cumbersome, so we instead treat a general method for finding the optimal solutions in the uniform case. We have depicted Figure \ref{fig:uniformpressure} at an angle so that the Euler--Lagrange lines are horizontal to reinforce the intuition that finding an optimal solution is alike to filling a tank with water, where level lines are lines of Euler--Lagrange. A simple method for finding the solution is as follows. First fix values of $\alpha$, $\beta$, $v_0$ and $q$. We now know which order all of the time steps $t_i$ are in and can draw the relevant picture. All that remains is to `fill' the tank according to the Euler--Lagrange level lines. Fixing $d$ tells us how much to fill the tank and hence the optimal trajectory is the surface level of the water once the area is equal to $d$.

\subsection{Phase Diagrams - Uniform Case}\label{sec:unifphasediagrams}

It is now time to introduce our first phase diagram, we will use these plots throughout the following sections to illustrate the possible different optimal trajectories depending on the values of $v_0$ and $d$. The case of $t_1\leq t_2\leq t_3 \leq t_4 \leq q$ is depicted in Figure~\ref{fig:uniformphasespacet1t2t3t4q}.

\begin{condpics}
\begin{figure}[ht]
\centering
\begin{tikzpicture}
  \pgfmathsetmacro{\vmax}{200}                   
  \pgfmathsetmacro{\alphaparam}{6}               
  \pgfmathsetmacro{\betaparam}{20}               

  \begin{axis}[
    width=12cm, height=8cm,
    xlabel={$d$}, ylabel={$v_0$},
    xmin=0, xmax=8000, ymin=0, ymax=210,
    axis lines=middle,
    legend style={
      at={(1.02,0.98)}, anchor=north west,
      font=\small, draw=none,
      row sep=6pt
    },
    legend cell align=left,
  ]

    \addplot[thick,blue,domain=0:\vmax,samples=200]
      ({ x^2/(2*\betaparam) },{ x });
    \addlegendentry{$d=\displaystyle\frac{v_0^2}{2\,\beta}$}

    \addplot[thick,green!60!black,domain=0:\vmax,samples=200]
      ({ \vmax^2*(1/(\alphaparam)) 
         - x^2/(2*\alphaparam) },
       { x });
    \addlegendentry{%
      $d=\displaystyle
        \frac{v_{\max}^2}{\alpha}
        - \frac{v_0^2}{2\,\alpha}$%
    }

    \addplot[dashed,gray] coordinates {(0,\vmax) (8000,\vmax)};
    \addlegendentry{$v_0 = v_{\max}$}

    \node[gray, anchor=north west]
      at (axis cs:7400,{\vmax-5}) {$v_{\max}$};

    \node[
      font=\small\bfseries,
      anchor=south west
    ] at (axis cs:1600,75) {$\alpha \leadsto EL \leadsto 0$};

    \node[
      font=\small,
      anchor=south west,
      rotate=75
    ] at (axis cs:260,100) {No legal trajectories};

    \node[
      font=\small\bfseries,
      anchor=south west
    ] at (axis cs:5500,130) {$\alpha \leadsto v_{\mathrm{max}}\leadsto EL\leadsto0$};

  \end{axis}
\end{tikzpicture}
\caption{Phase-space diagram of $v_0$ vs.\ $d$ for $t_1 \leq t_2\leq t_3\leq t_4 \leq q$, with parameters $\alpha=6$, $\beta=20$, and $v_{\max}=200$. Units do not reflect realistic physical values.} \label{fig:uniformphasespacet1t2t3t4q}
\end{figure}
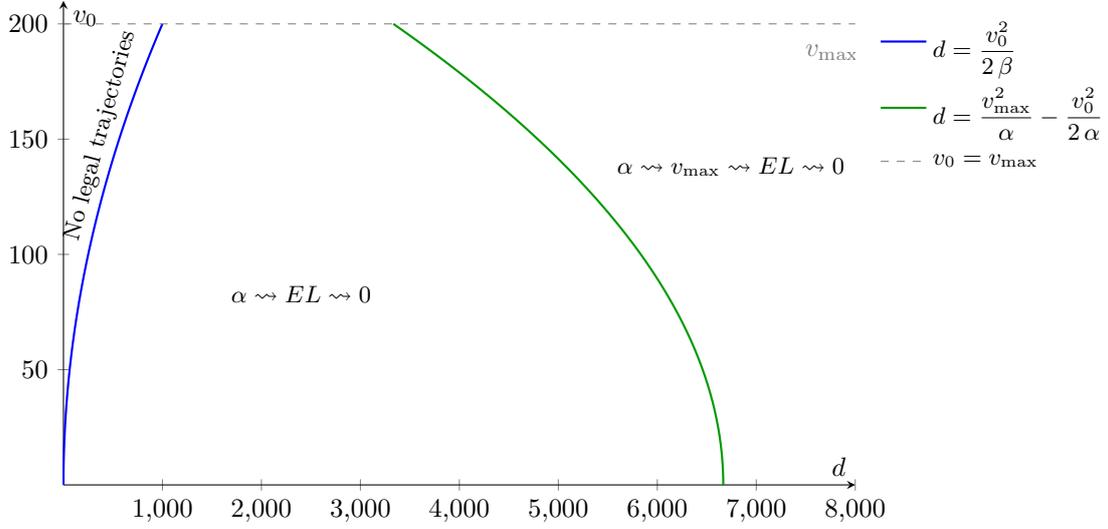
\end{condpics}

\section{The Exponential Case}

In this section we analyse the case where \(Y \sim \text{Exp}(\la)\), so that \(T\sim\text{Exp}(\la)\), \(f(t)=\la e^{-\la t}\) for \(t\ge0\), and we have $q = \infty$. The decision to solve this case is motivated by the memoryless property of the Exponential distribution, which should simplify the problem, since it follows that the optimal acceleration can depend only on the current position and velocity, and not the elapsed time. We have also seen in Lemma~\ref{lm:Bcondition} that the Exponential distributions have the special property that the Euler--Lagrange curves are translates of one another. The ODE in \eqref{eq:masterode} becomes
\[
 \dot v-\la v=-\al-\la\vmax.
\]
A particular solution is \(v(t)=\vmax+\frac\al\la\), while the homogeneous equation \(\dot v-\la v=0\) has general solution \(v(t)=-be^{\la t}\). Combining these gives us a complete general solution of the unconstrained Euler--Lagrange equation: 

\begin{equation}
v(t) = \frac{\al}{\lambda} + \vmax -be^{\lambda t}\label{generalsol}.
\end{equation}

Note that this is consistent with \eqref{eq:velexpression}, where the constant $b$ is written in terms of other constants as $\frac{\al+B}{\la}$. As a first iteration, we are interested in a solution that at time zero starts with \(0<v_0:\,=v(0)\le\vmax\). With this initial condition we have the solution
\begin{equation}
 v(t)=\vmax+\frac\al\la-\Bigl(\vmax-v_0+\frac\al\la\Bigr)e^{\la t}.\label{eq:vsol1}
\end{equation}
At time
\[
 t=\frac1\la\ln\Bigl(\frac{\la\vmax+\al}{\la\vmax-\la v_0+\al}\Bigr)
\]
this velocity becomes zero, after which it turns negative, so this solution cannot be used beyond this time. 

We can also integrate \eqref{eq:vsol1} to get the distance \(x(t)\) travelled by time \(t\):
\[
 x(t)=\int_0^tv(r)\,\di r=\Bigl(\vmax+\frac\al\la\Bigr)t-\Bigl(\frac{\vmax-v_0}\la+\frac\al{\la^2}\Bigr)\cdot\bigl(e^{\la t}-1\bigr).
\]

We can also now solve for $v_\beta$ explicitly. By setting the derivative of the general solution to Euler--Lagrange \eqref{generalsol} equal to $-\beta$, we have that
\[
\dot{v}(t) = -\lambda be^{\lambda t} = -\beta.
\]
Substituting this into \eqref{generalsol} gives us
\[
v_\beta=\vmax + \frac{\alpha-\beta}{\lambda}.
\]

\subsection{When to slam the brakes?}
\label{sec:whentoslamthebrakes}
We saw that, when \(v_\be>0\), it is possible for an optimal trajectory to end with a segment of Euler--Lagrange curve followed by $\beta$ deceleration to come to a halt at the traffic light. But our analysis so far does not tell us the optimal choice of the velocity $v_c$ at which to switch from E--L to $\beta$.  We denote the time when this switch happens by $t_c$, and also define
\begin{equation}
 A=\vmax+\frac{\al}{\lambda}.\label{eq:adef}
\end{equation}
\begin{tm}\label{tm:vc}
Suppose that $v_\beta > 0$. If $v^*$ is an optimal trajectory that ends in $\text{E--L}\leadsto\beta$, then $v^*$ switches from an Euler--Lagrange curve to $\beta$ deceleration upon reaching a velocity $v_c^*$ that is the unique solution to the equation $\mathcal{F}(v_c) = 0$ in the range $v_\beta \le v_c < \vmax$, where
\begin{equation}
\mathcal{F}(v_c) := -\frac{\lambda^2}{\beta}v_c^2 + \frac{\lambda}{\beta}(\beta+\lambda A)v_c + (e^{-\frac{\lambda v_c}{\beta}}-1)(\beta+\lambda A).\label{eq:vc_expr}
\end{equation}
\end{tm}
For certain parameter choices the unique solution of $\mathcal{F}(v_c) = 0$ with $v_c > v_\beta$ may in fact satisfy $v_c \ge \vmax$, in which case there is no optimal trajectory that ends in $\text{E--L}\leadsto\beta$; instead the optimal trajectory must end in $\vmax\leadsto\beta$ or in $\alpha\leadsto\beta$.

Because the Exponential Distribution is memoryless, the decision about when to switch optimally should only depend on the current velocity and location (i.e.\ volume of `liquid' past the current time); information about the earlier part of the trajectory is irrelevant. In particular, for the analysis we may assume without loss of generality that the  Euler--Lagrange curve was preceded by a \(\vmax\) phase, and that $v_0 = \vmax$. Therefore for ease of calculation we will progress with the class of trajectories $\vmax \leadsto \text{E--L} \leadsto\beta$.
Recalling the form of \eqref{generalsol} the trajectory we have described can be written as 
\begin{equation}
v(t)=
\begin{cases}
\vmax,            & 0\leq t\leq t_0(t_c),\\
A-(A-v_c)e^{-\lambda(t_c-t)},     & t_0(t_c)\leq t\leq t_c,\\
v_c-\beta (t-t_c),       & t_c\leq t\leq t_c +\frac{v_c}{\beta}.
\end{cases}\label{eq:expoptp}
\end{equation}
The notation $t_0(t_c)$ conveys that $t_0$ is simply a function of $t_c$ thanks to the equation
\begin{equation}
\vmax=A-(A-v_c)e^{-\lambda(t_c-t_0)}.
\label{eq:t_ctot_0}
\end{equation}
We divide the proof into two steps. The first step shows how \eqref{eq:vc_expr} arises, and the second step shows existence and uniqueness of a positive solution. We will call the action \(S_{\exp}\) in this Exponential case (specialising \eqref{eq:swithpressure}).
\begin{proof}[Proof: deriving \eqref{eq:vc_expr}.]
We have that
\begin{align*}
S_{\exp}(t_c,B)=
&\int_{0}^{t_0}\int_{0}^{\vmax} P_B(t,C)\,\di C\,\di t +
\int_{t_0}^{t_c}\int_{0}^{A-(A-v_c)e^{-\lambda(t_c-t)}} P_B(t,C)\,\di C\,\di t\\ &+
\int_{t_c}^{\,t_c+v_c/\beta}\int_{0}^{\,v_c-\beta(t-t_c)} P_B(t,C)\,\di C\,\di t.
\end{align*}
 The constant \(B\) is arbitrary, but we need to optimise our \(\vmax\leadsto\text{E--L}\leadsto\be\) trajectory in the set \(V\). The constant volume \(d\) restriction means that, unless \(v'(t_c)=-\be\), changing \(t_c\) will also cause a translation of the Euler-Lagrange curve, modifying the value of \(t_0\). Naturally, \(v_c\) is also sensitive to this change. Below we implicitly differentiate with respect to $t_c$, with the understanding that \(t_0\) and \(v_c\) all change to keep the trajectory in \(V\), meeting the constraint that the total distance equals \(d\). We will use the notation $'$ to denote the derivative with respect to $t_c$. To find an optimal $t_c$ we set $S'_{\exp} = 0$. (Later we will also check the signs of the derivatives with respect to $t_c$ at the endpoints of the allowed range for $t_c$, namely where $v_c = v_\beta$ or $v_c = \vmax$.)
\begin{align}
0 \;=\; S_{\exp}'
&= \notag\\[2pt]
&\quad t_0'\int_{0}^{\vmax} P_B(t_0,C)\,\di C
\tag{T1}\label{term:T1}\\
&\quad + \int_{0}^{A-(A-v_c)e^{-\lambda(t_c-t_c)}} P_B(t_c,C)\,\di C
\tag{T2}\label{term:T2}\\
&\quad - t_0'\int_{0}^{A-(A-v_c)e^{-\lambda(t_c-t_0)}}P_B(t_0,C)\,\di C
\tag{T3}\label{term:T3}\\
&\quad + \int_{t_0}^{t_c}[v_c'+\lambda(A-v_c)]e^{-\lambda(t_c-t)}\cdot
P_B(t,A-(A-v_c)e^{-\lambda(t_c-t)})\,\di t
\tag{T4}\label{term:T4}\\
&\quad + \bigl(1+\tfrac{v_c'}{\beta}\bigr)
\int_0^{v_c-\beta(t_c+\frac{v_c}{\beta}-t_c)}P_B(t_c+\tfrac{v_c}{\beta},C)\,\di C
\tag{T5}\label{term:T5}\\
&\quad - \int_0^{v_c-\beta(t_c-t_c)}P_B(t_c,C)\,\di C
\tag{T6}\label{term:T6}\\
&\quad + (v_c'+\beta)\int_{t_c}^{\,t_c+v_c/\beta}
P_B(t,v_c-\beta(t-t_c))\,\di t
\tag{T7}\label{term:T7}
\end{align}
 There are many cancellations here. \eqref{term:T1} and \eqref{term:T3} cancel since the upper limit of the integrals are both $\vmax$ by \eqref{eq:t_ctot_0}. \eqref{term:T2} and \eqref{term:T6} cancel, since both of their upper limits simplify to $v_c$. \eqref{term:T5} is equal to $0$ since both the limits of the integral are $0$. This leaves us with \eqref{term:T4} and \eqref{term:T7}.

We now turn the generic formula \eqref{eq:pdef} into the specific pressure for the Exponential case:
\[
 P_B(t,\,C)=-(\vmax - C)\la e^{-\la t}+\al\bigl(1-e^{-\la t}\bigr)+B.
\]
 If we expand \eqref{term:T4}, using abbreviation \eqref{eq:adef}, it can be written as
\[
 \begin{aligned}
  &\quad\int_0^{t_c}[v_c'+\lambda(A-v_c)]e^{-\lambda(t_c-t)}\cdot\bigl[-\bigl(\vmax-A+(A-v_c)e^{-\lambda(t_c-t)}\bigr)\la e^{-\la t}+\al\bigl(1-e^{-\la t}\bigr)+B\bigr]\di t.\\
  &=\int_0^{t_c}[v_c'+\lambda(A-v_c)]e^{-\lambda(t_c-t)}\cdot [\al+B-\la(A-v_c)e^{-\lambda t_c}]\di t.
 \end{aligned}
\]
 We still have not fixed $B$; it is just a constant that we can use to shift isobar lines up and down. As a result of the volume boundary condition $\int_0^qv(t) \di t = d$, $S_{\exp}'(t_c,B)-S_{\exp}'(t_c,0)=(B\cdot d-0 \cdot d)'=0$, so $S_{\exp}'(t_c,B)$ does not depend on $B$. If we fix $B=\la(A-v_c)e^{-\lambda t_c}-\al$ it is clear that \eqref{term:T4} is equal to $0$. Our optimality condition now is only that \eqref{term:T7} must be equal to $0$ for this value of $B$.


 We will examine the sign of \(v_c'+\be\) a bit later, but now consider \eqref{term:T7} without this factor. Expanding $P_B(t,v_c-\beta(t-t_c))$ we are left with 
\begin{equation}\label{eqn:T7settozero}
 \begin{aligned}
  \int_{t_c}^{t_c+\frac{v_c}{\beta}} P_B(t,C)\di t&=\int_{t_c}^{t_c+\frac{v_c}{\beta}}-\bigl(\vmax - v_c+\beta(t-t_c)\bigr)\la e^{-\la t}+\al\bigl(1-e^{-\la t}\bigr)+B\di t\\
  &=\int_{t_c}^{t_c+\frac{v_c}{\beta}}\la(v_c-\beta(t-t_c)-A)e^{-\lambda t}+\al +B \di t.
 \end{aligned}
\end{equation}
The physical intuition behind equating this to $0$ is that otherwise the slope $-\beta$ wall of our imaginary tank of liquid would feel an overall force, the integral of the pressure along the wall, pushing it to translate as a rigid plate that remains at slope $-\beta$. The pressure along the E--L part of the liquid boundary is zero, by our choice of $B$, so this part of the boundary feels no force.  Although the pressure integrals along the $\alpha$ and $\vmax$ parts of the tank boundary are positive, they are already pushed as far as is allowed by the constraints. 

Performing the integral~\eqref{eqn:T7settozero}, we obtain
\[
 (v_c+\beta t_c-A)\bigl(e^{-\lambda t_c}-e^{-\lambda(t_c+\frac{v_c}{\beta})}\bigr)+\beta\bigl((t_c+\frac{v_c}{\beta})e^{-\lambda(t_c+\frac{v_c}{\beta})}-t_ce^{-\lambda t_c}\bigr)+\frac{\beta}{\lambda}\bigl(e^{-\lambda(t_c+\frac{v_c}{\beta})}-e^{-\lambda t_c}\bigr) + (\al+B)\frac{v_c}{\beta}.
\]
 The Euler--Lagrange curve at $v_c$ gives us $v_c=A-\frac{\al+B}\la e^{\lambda t_c}$ which rearranges to $\al+B=\la(A-v_c)e^{-\lambda t_c}$. We can substitute this into the expression above to eliminate exponentials involving $t_c$, obtaining
\[
 (v_c+\beta t_c-A)\bigl(1-e^{-\lambda\frac{v_c}{\beta}}\bigr)+\beta\bigl((t_c+\frac{v_c}{\beta})e^{-\lambda\frac{v_c}{\beta}}-t_c\bigr)+\frac{\beta}{\lambda}\bigl(e^{-\lambda\frac{v_c}{\beta}}-1\bigr) + \la(A-v_c)\frac{v_c}{\beta}.
\]

Multiplying through by $\lambda$, this simplifies to
\[
\lambda(v_c-A)-\beta+\frac{\lambda^2}{\beta}(A-v_c)v_c+e^{-\lambda \frac{v_c}{\beta}}(\beta+\lambda A).
\]
Rearranging these terms, we arrive at the expression for $\Fc(v_c)$.
\end{proof}

\begin{proof}[Proof of existence and uniqueness of a solution $v_c^*$ for \eqref{eq:vc_expr}]
\begin{figure}[htbp]
  \centering
  \includegraphics[width=0.8\textwidth]{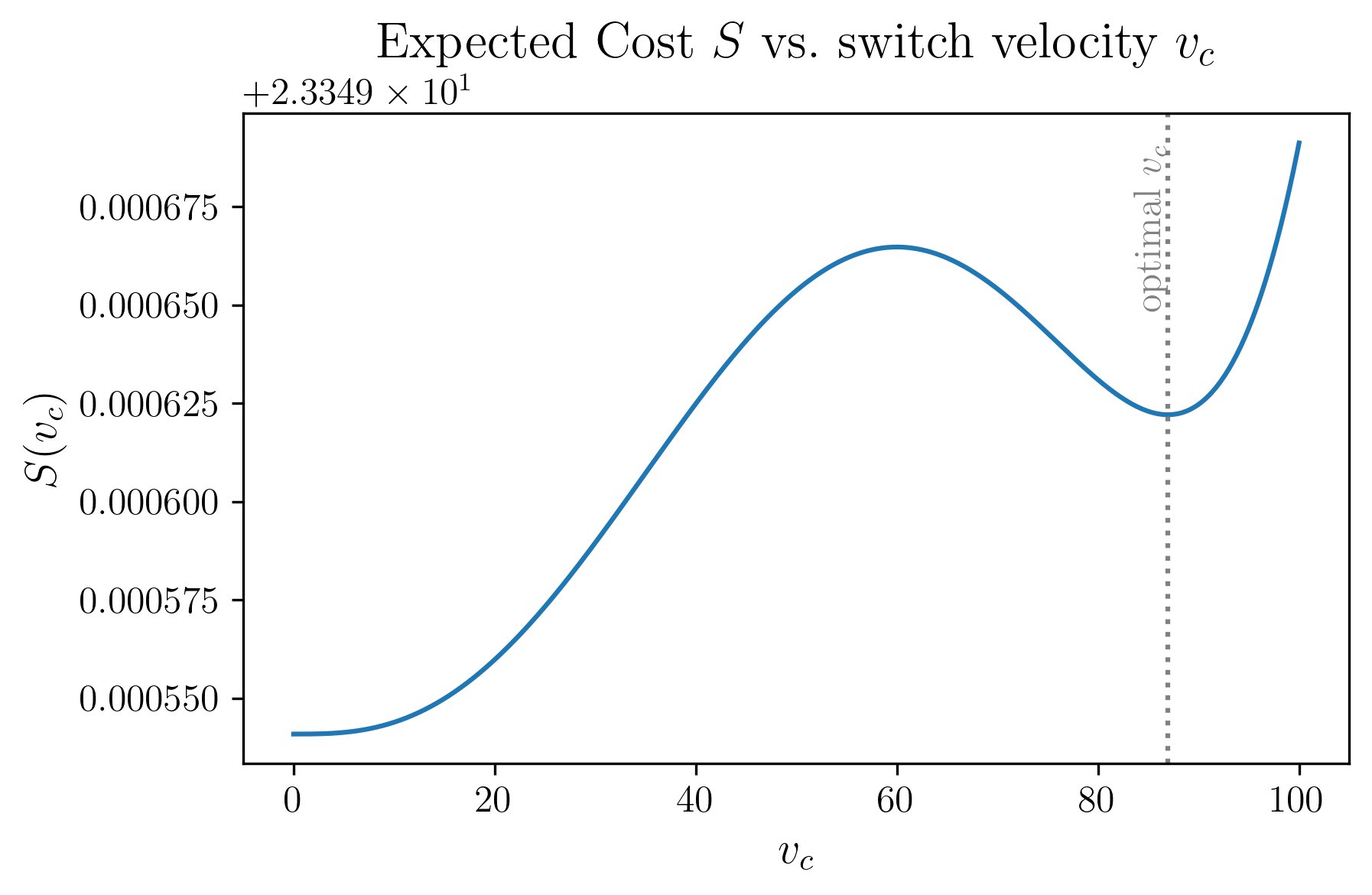}
  \caption{Expected cost \(S(v_c)\) as a function of switch velocity \(v_c\), with parameters
$\lambda=0.1$, $\alpha=6$, $\beta=20$, $d=4000$, $L=4000$ and $v_{\max}=200$. The optimal $v_c\approx86.94$ is given by the local minimum. As we will see slightly later, a restriction \(v_c\ge v_\be\) does not allow us to exploit the global minimum on the left of the curve.}
\end{figure}

To show that such a root exists, we must perform some basic analysis of the function. First we take derivatives.

\[
\Fc'(v_c)=-\frac{2\lambda^2v_c}{\beta} + \frac{\lambda}{\beta}(\beta +\lambda A)(1-e^{-\frac{\lambda v_c}{\beta}})
\]

and 

\[
\Fc''(v_c) = -\frac{2\lambda^2}{\beta} + \frac{\lambda^2}{\beta^2}(\beta+\lambda A)e^{-\frac{\lambda v_c}{\beta}}.
\]

\begin{pr}
For $v_\beta>0$, $v_c^*\geq v_\beta$, where $v_c^*$ is the optimal change velocity from an Euler--Lagrange curve to $\beta$ deceleration and $v_\beta$ is the point at which an Euler lagrange curve decelerates at $\beta$.
\end{pr}
\begin{proof}
Taking the value of the second derivative of $\Fc(v_c)$ at 0 we find
\[
\Fc''(0)=\frac{\lambda^2}{\beta^2}(-\beta+\lambda A)=\frac{\lambda^3}{\beta^2}\left(\vmax+\frac{\alpha-\beta}{\lambda}\right)=\frac{\lambda^3}{\beta^2}v_\beta.
\]

We can also see that $\Fc(0)=\Fc'(0)=0$ trivially. As $v\rightarrow \infty$, ${F}(v_c)\rightarrow -\infty$ and as $v\rightarrow -\infty$, ${\Fc}(v_c)\rightarrow \infty$. For $v_\beta>0$, $\Fc''(0)>0$ and $\Fc''(v_c)\searrow-\frac{2\lambda^2}{\beta}<0$,  so we indeed must have a positive non-trivial root which we called $v_c^*$. It also follows that $\Fc(v_c)>0$ on the interval $(0,v_c^*)$ and $\Fc(v_c)<0$ on the interval $(v_c^*,\infty)$. Similarly, if $v_\beta \leq0$, $\Fc''(0)\leq0$ so we must have one trivial non-negative root.

\begin{condpics}
\begin{figure}[htbp]
  \centering
  \includegraphics[width=0.8\textwidth]{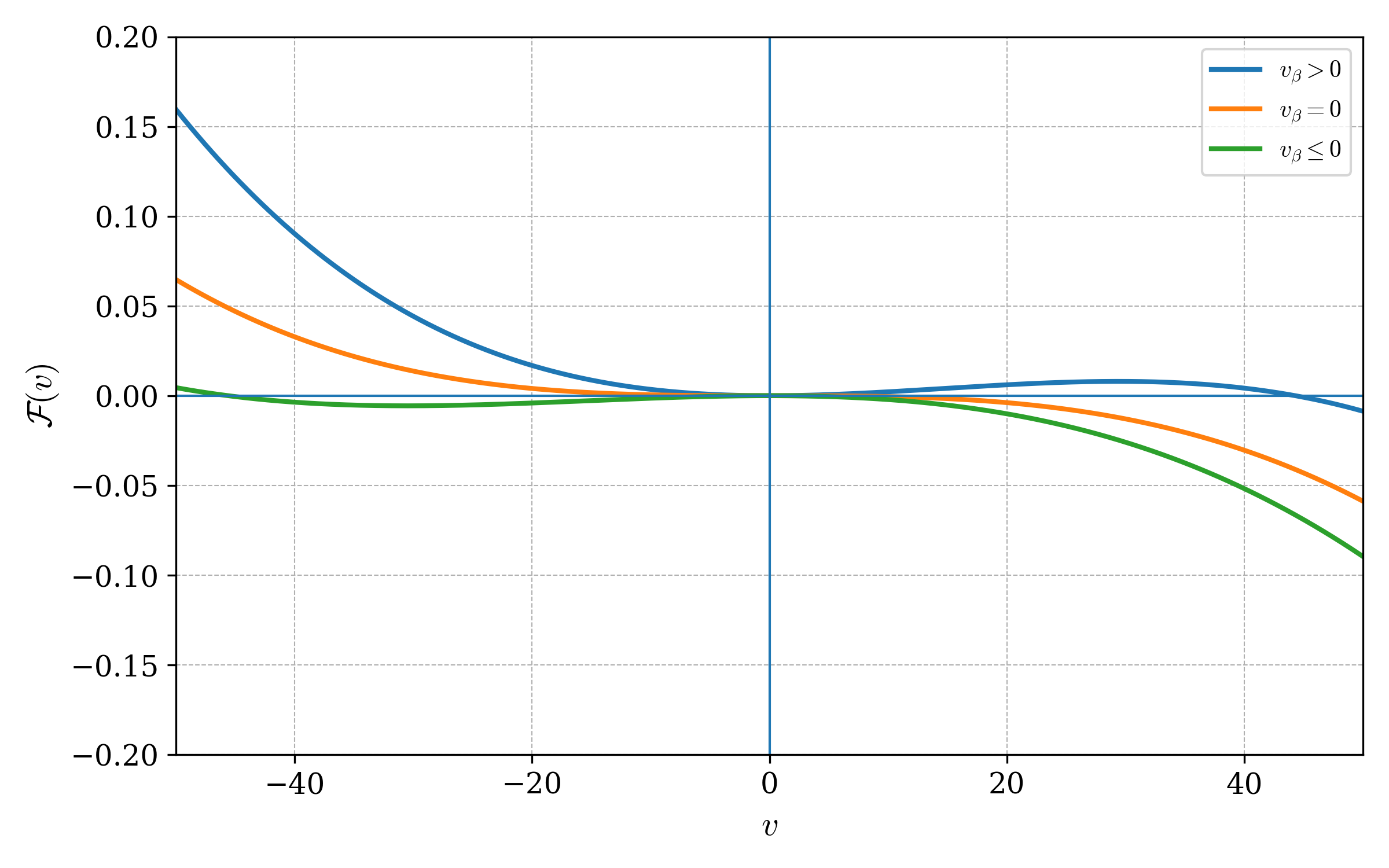}
  \caption{$\Fc(v)$ against $v$ for different values of $v_\beta$}
\end{figure}
\end{condpics}

Now we show that $v_c^*\geq v_\beta$ always.
We define the function
\[
\Gc(U,A):\,=-\beta U^2 + (\beta+\lambda A)(U+e^{-U}-1).
\]
and notice that at $U=\frac{\lambda v_c}{\beta}$ this takes the value of $\Fc(v_c)$.
Next we define 
\[
\Hc(U) :\,= \Gc(U,\frac{\beta}{\lambda}(U+1)) = -\beta U^2 + (\beta + \beta(U+1))(U+e^{-U}-1)= \beta((U-2)+(U+2)e^{-U}).
\]
We can now deduce that
\begin{itemize}
\item $\frac{1}{\beta}\Hc(U)\Big|_{U=0}=0$
\item $\frac{1}{\beta}\Hc'(U)\Big|_{U=0}=1+e^{-U}-U e^{-U}-2e^{-U}\Big|_{U=0}=1-U e^{-U}-e^{-U}\Big|_{U=0}=0$
\item $\frac{1}{\beta}\Hc''(U)=-e^{-U}+Ue^{-U}+e^{-U}=Ue^{-U}>0$ if $U>0$.
\end{itemize}
These three statements imply that $\Hc(U)$ is positive for any $U>0$. In particular, picking $U= \frac{\lambda v_\beta}{\beta}$ gives 
\[
0<\Hc\Bigl(\frac{\lambda v_\beta}{\beta}\Bigr)=\Gc\Bigl(\frac{\lambda v_\beta}{\beta},v_\beta+\frac{\beta}{\lambda}\Bigr)= \Gc\Bigl(\frac{\lambda v_\beta}{\beta},A\Bigr)=\Fc(v_\beta).
\]
Therefore, for $v_\beta>0$, $\Fc(v_\beta)>0$. In view of the above properties of $\Fc(v)$, we must have $v_c^*\geq v_\beta$.
\end{proof}

To connect this result to \(S_{\exp}'\), we now carefully examine the factor \(v_c'+\be\) in \eqref{term:T7}.
\begin{lm}
 \[
  v'_c=-\frac{\la\be\vmax(A-v_c)}{\be(\vmax-v_c)+\la(A-v_c)v_c}.
 \]
\end{lm}
\begin{proof}
 The value of \(v_c\) is determined by the constraint \(\int_0^\infty v(t)\di t=d\). Substituting in from \eqref{eq:expoptp},
 \[
  \int_0^{t_0}\vmax\di t+\int_{t_0}^{t_c}A-(A-v_c)e^{-\la(t_c-t)}\di t+\int_{t_c}^{t_c+v_c/\be}v_c-\be(t-t_c)\di t=d.
 \]
 Differentiating with respect to \(t_c\),
 \[
  \vmax t_0'+v_c-\bigl(A-(A-v_c)e^{-\la(t_c-t_0)}\bigr)t_0'+\int_{t_0}^{t_c}\bigl(v_c'+\la(A-v_c)\bigr)e^{-\la(t_c-t)}\di t-v_c+\int_{t_c}^{t_c+v_c/\be}v_c'+\be\di t=0.
 \]
 The \(t_0'\) terms cancel via \eqref{eq:t_ctot_0}:
 \[
  v_c'\Bigl(\int_{t_0}^{t_c}e^{-\la(t_c-t)}\di t+\frac{v_c}\be\Bigr)+\int_{t_0}^{t_c}\la(A-v_c)e^{-\la(t_c-t)}\di t+v_c=0.
 \]
 Notice that, again via \eqref{eq:t_ctot_0},
 \[
  \la\int_{t_0}^{t_c}e^{-\la(t_c-t)}\di t=1-e^{-\la(t_c-t_0)}=\frac{\vmax-v_c}{A-v_c}.
 \]
 Hence the above display reads
 \[
  v_c'\Bigl(\frac1\la\frac{\vmax-v_c}{A-v_c}+\frac{v_c}\be\Bigr)+\vmax=0.
 \]
\end{proof}
\begin{cor}\label{cr:vcsign}
 When \(0\le v_c\le\vmax\), we have \(v_c'<0\).
\end{cor}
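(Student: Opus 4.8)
The plan is to read off the sign of $v_c'$ directly from the closed-form expression just established in the preceding lemma, namely
\[
v'_c=-\frac{\la\be\vmax(A-v_c)}{\be(\vmax-v_c)+\la(A-v_c)v_c},
\]
by arguing that on the range $0\le v_c\le\vmax$ the numerator is strictly negative while the denominator is strictly positive. There is no deep content here; the whole corollary reduces to a careful sign check, and the only point that needs genuine attention is that the denominator is a sum of two separately nonnegative terms, so I must rule out the possibility that both vanish simultaneously.

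First I would recall from \eqref{eq:adef} that $A=\vmax+\al/\la$, so that $A>\vmax\ge v_c\ge 0$ and hence $A-v_c>0$ throughout the range. Since $\la$, $\be$ and $\vmax$ are all strictly positive constants, the numerator $-\la\be\vmax(A-v_c)$ is then strictly negative for every $v_c\in[0,\vmax]$. Next I would bound the denominator below. The first term $\be(\vmax-v_c)$ is nonnegative because $v_c\le\vmax$, and the second term $\la(A-v_c)v_c$ is nonnegative because $A-v_c>0$ and $v_c\ge 0$.

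To upgrade this to strict positivity I would treat the two boundary cases explicitly. The first summand vanishes only when $v_c=\vmax$, and at that value the second summand equals $\la(A-\vmax)\vmax=\al\vmax>0$; conversely, whenever $v_c<\vmax$ the first summand is already strictly positive. Hence the denominator cannot vanish and is strictly positive on all of $[0,\vmax]$. Combining the two sign computations, $v_c'$ is a strictly negative quantity divided by a strictly positive one, so $v_c'<0$, as claimed. The only real obstacle is precisely this boundary case $v_c=\vmax$, where one term of the denominator degenerates and the strict positivity has to be recovered from the evaluation $\la(A-\vmax)\vmax=\al\vmax$.
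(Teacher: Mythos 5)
Your proof is correct and takes essentially the same approach the paper intends: the corollary is stated without a written proof because it follows immediately from the preceding lemma's formula by precisely the sign check you perform, using $A=\vmax+\al/\la>\vmax\ge v_c$ to make the numerator strictly negative and the denominator strictly positive on $[0,\vmax]$. Your explicit handling of the boundary case $v_c=\vmax$, where the denominator reduces to $\la(A-\vmax)\vmax=\al\vmax>0$, is exactly the one detail worth spelling out.
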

\begin{cor}
 When \(v_\be>0\), we have \(v_c'+\be>0\) in the range \(v_\be<v_c<\vmax\); \(v_c'+\be=0\) at \(v_\be\) and \(\vmax\) and negative outside this interval.
\end{cor}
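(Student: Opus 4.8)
The plan is to start from the explicit expression for $v_c'$ proved in the preceding lemma and simply add $\beta$, then factor the resulting rational function so that its roots become visible. Writing $D := \beta(\vmax - v_c) + \lambda(A - v_c)v_c$ for the common denominator, I would compute
\[
v_c' + \beta = \frac{\beta\bigl[D - \lambda\vmax(A - v_c)\bigr]}{D}.
\]
The crux is to simplify the bracket in the numerator. Grouping the two terms carrying the factor $\lambda(A-v_c)$ gives $D - \lambda\vmax(A-v_c) = (\vmax - v_c)\bigl[\beta - \lambda(A - v_c)\bigr]$, and then substituting $A = \vmax + \alpha/\lambda$ and $v_\beta = \vmax + (\alpha-\beta)/\lambda$ collapses the second factor exactly to $\lambda(v_c - v_\beta)$. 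This yields the clean factorisation
\[
v_c' + \beta = \frac{\beta\lambda\,(\vmax - v_c)(v_c - v_\beta)}{\beta(\vmax - v_c) + \lambda(A - v_c)v_c}.
\]

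With this in hand the sign analysis is immediate, provided I first confirm that the denominator never vanishes on the relevant range. On $0 \le v_c \le \vmax$ one has $\vmax - v_c \ge 0$ and $A - v_c \ge A - \vmax = \alpha/\lambda > 0$, so both summands of $D$ are non-negative; and at the endpoints $D = \beta\vmax > 0$ (at $v_c=0$) and $D = \alpha\vmax > 0$ (at $v_c=\vmax$), so in fact $D > 0$ throughout $[0,\vmax]$. Consequently the sign of $v_c' + \beta$ agrees with the sign of the downward-opening parabola $(\vmax - v_c)(v_c - v_\beta)$, whose roots are precisely $v_\beta$ and $\vmax$.

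The conclusion then follows by reading off this quadratic. Under the standing modelling assumption that a road vehicle brakes at least as hard as it accelerates, $\beta \ge \alpha$, we have $v_\beta \le \vmax$, so $(v_\beta,\vmax)$ is exactly the open interval between the two roots, on which the parabola is positive; hence $v_c' + \beta > 0$ there. At $v_c = v_\beta$ and at $v_c = \vmax$ the parabola vanishes, giving $v_c' + \beta = 0$. For $0 \le v_c < v_\beta$ the factors $(\vmax - v_c)$ and $(v_c - v_\beta)$ have opposite signs, so the parabola — and therefore $v_c' + \beta$ — is negative, which is the claim for values outside the interval. I expect the only genuine obstacle to be the algebraic bookkeeping, specifically verifying that $\beta - \lambda(A - v_c)$ reduces precisely to $\lambda(v_c - v_\beta)$ through the definitions of $A$ and $v_\beta$; once the factorisation and the positivity of $D$ are established, everything else is routine sign-reading of a quadratic with known roots.
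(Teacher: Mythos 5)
Your proof is correct and takes essentially the same approach as the paper: the paper rewrites $v_c'+\beta>0$ as the quadratic inequality $\lambda v_c^2+(\beta-\lambda A-\lambda \vmax)v_c+\lambda \vmax A-\beta \vmax<0$ and extracts the roots $v_\beta$ and $\vmax$ via the quadratic formula, which is precisely your factorisation $v_c'+\beta=\beta\lambda(\vmax-v_c)(v_c-v_\beta)/D$ in disguise (the two quadratics agree up to sign). Your explicit verification that the denominator $D$ stays positive on $[0,\vmax]$ is a minor gain in rigour over the paper, which cross-multiplies without comment.
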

\begin{proof}
 \(v_c'+\be>0\) is equivalent to
 \[
   \frac{\la\vmax(A-v_c)}{\be(\vmax-v_c)+\la(A-v_c)v_c}<1,
   \]
   i.e.
   \[
   \la v_c^2+(\be-\la A-\la\vmax)v_c+\la\vmax A-\be\vmax<0.
 \]
 The two roots of this convex quadratic are
 \begin{multline*}
  \frac{\la A+\la\vmax-\be\pm\sqrt{(\be-\la A-\la\vmax)^2-4\la(\la\vmax A-\be\vmax)}}{2\la}\\
   =\frac{\la A+\la\vmax-\be\pm(\la\vmax+\be-\la A)}{2\la}=\left\{
    \begin{aligned}
     &\vmax\\
     &A-\frac\be\la=v_\be.
    \end{aligned}
   \right.
 \end{multline*}
 The inequality therefore holds on the interval \((v_\be,\,\vmax)\).
\end{proof}
We can now complete the proof of Theorem \ref{tm:vc}. \(S_{\exp}'\) was shown to be the product of positive constants, \(\Fc(v_c)\) and the factor \(v_c'+\be\). We also saw that, when \(v_\be>0\), we have the following signs:
\begin{center}
 \begin{tabular}{|c|c|c|c|}
  \hline
  &\(0<v_c<v_\be\)&\(v_\be<v_c<v_c^*\)&\(v_c^*<v_c<\vmax\)\\
  \hline
  \(\Fc(v_c)\):&\(+\)&\(+\)&\(-\)\\
  \hline
  \(v_c'+\be\):&\(-\)&\(+\)&\(+\)\\
  \hline
  \((v_c'+\be)\cdot\Fc(v_c)\):&\(-\)&\(+\)&\(-\)\\
  \hline
 \end{tabular}
\end{center}
By Corollary \ref{cr:vcsign} the sign of $\frac{\partial S_{\exp}}{\partial v_c}$ is the opposite of the sign of \(S_{\exp}'\). Thus we see that \(v_\be\) is a local maximum and, if \(v_c^*<\vmax\), then this is a unique minimum in the permissible range \((v_\be,\,\vmax)\).
\end{proof}

In the next section we will show phase space diagrams that divide the $(v_0,d)$-plane into regions corresponding to different structures of optimal trajectory. To do this we will need to create a refinement of the list of optimal trajectories in Theorem~\ref{thm:proofofcombinatorics}. We now know that the choice of these trajectories depends on the values of $v_c^*$ and $v_\beta$. For $v_c^*\geq \vmax$ the optimal solution curve will never include an E--L segment and for $v_\beta\leq0$ it will never change from E--L to \(\be\) deceleration. 
\begin{figure}[!ht]
\begin{center}
 \begin{minipage}[t]{0.3\linewidth}
  {\bf Starting with \(\al\)}
  \begin{itemize}
   \item $\alpha \leadsto \beta$
   \item $\alpha \leadsto EL \leadsto \beta$
   \item $\alpha \leadsto \vmax \leadsto EL \leadsto \beta$
   \item $\alpha \leadsto EL$ {\scriptsize($v_{\beta}\le 0$)}
   \item $\alpha \leadsto \vmax \leadsto EL$ {\scriptsize($v_{\beta}\le 0$)}
   \item $\alpha \leadsto \vmax \leadsto \beta$ {\scriptsize($v_c^*\ge \vmax$)}
  \end{itemize}
 \end{minipage}
 \hskip1cm
 \begin{minipage}[t]{0.3\linewidth}
  {\bf Starting with \(\be\)}
  \begin{itemize}
   \item $\beta$
   \item $\be \leadsto EL \leadsto \beta$
   \item $\beta \leadsto EL$ {\scriptsize($v_{\beta}\le 0$)}
  \end{itemize}
 \end{minipage}
 
 \bigskip\noindent
 \begin{minipage}[t]{0.3\linewidth}
  {\bf Starting with \(\vmax\)}
  \begin{itemize}
   \item $\vmax \leadsto EL \leadsto \beta$
   \item $\vmax \leadsto EL$ {\scriptsize($v_{\beta}\le 0$)}
   \item $\vmax \leadsto \beta$ {\scriptsize($v_c^*\ge \vmax$)}
  \end{itemize}
 \end{minipage}
 \hskip1cm
 \begin{minipage}[t]{0.3\linewidth}
  {\bf Starting with E--L}
  \begin{itemize}
   \item $EL \leadsto \beta$
   \item $EL$ {\scriptsize($v_{\beta}\le 0$)}
  \end{itemize}
 \end{minipage}
\end{center}
\caption{The possible combinations making up an optimal trajectory (Exponential case)}
\end{figure}

\subsection{Phase Diagrams - Exponential Case}
\label{sec:expphasediagrams}
We can now construct our first phase space diagram under the conditions that $v_c^*<\vmax$ and $v_\beta>0$. Firstly, we must find expressions for the boundaries between each phase. The diagram must start with the boundary condition $d\geq \frac{v_0^2}{2\beta}$, since otherwise it is not physically possible for the car to decelerate to a standstill without crashing into the traffic light. If the car starts at $v_0$ below $v_c^*$, we know that it is never optimal to travel along an Euler--Lagrange curve, since the solution curve would have changed to $\beta$ deceleration by this point. The only possible combination of trajectories with no E--L segment and $v_0<v_c*<\vmax$ is $\alpha \leadsto \beta$ since only $\beta$ deceleration is the boundary we have just discussed. This $\alpha \leadsto \beta$ phase will continue until we cross $v_c^*$ somewhere in the motion, allowing us to solve for another clear boundary. Combining the distance covered accelerating from $v_0$ to $v_c^*$ at $\alpha$, $\frac{(v_c^*)^2-v_0^2}{2\alpha}$, with the distance covered decelerating from $v_c^*$ to a standstill, $\frac{(v_c^*)^2}{2\beta}$, we have $d=v_c^2(\frac{1}{2\beta}+\frac{1}{2\alpha})-\frac{v_0^2}{2\alpha}$ as our next boundary. Above this boundary, if $v$ is below an E--L curve, the vehicle will accelerate at $\alpha$ to some value $v_a>v_c^*$, and then travel along an E--L curve since this is always the most optimal above $v_c^*$, until it reaches $v_c^*$, after which it must return to $\beta$ deceleration until stationary. The next boundary we seek is when $v_a$ can increase no further since it is equal to $\vmax$. The $\alpha$ acceleration from $v_0$ to $v_a$ covers $\frac{v_a^2-v_0^2}{2\alpha}$ metres. Next, we must fit an E--L curve to start at $v_a$. The standard E--L curve we derived earlier is given by $v(t)=A-be^{\lambda t}$, where $A=\vmax+\frac{\alpha}{\lambda}$ and we can vary $b$. Using the initial conditions $v(0)=v_a$ we can deduce that $A-b=v_a$ and thus $v(t)=A-(A-v_a)e^{\lambda t}$. Now we compute the distance travelled from $v_a$ to $v_c^*$, which occurs at $t_c^*$.
\[
\int_0^{t_{c}^*} A-(A-v_a)e^{\lambda t}\di t = At_c^* - \frac{A-v_a}{\lambda}(e^{\lambda t_c^*}-1).
\]
Since we have that $v(t_c^*)=v_c^*=A-(A-v_a)e^{\lambda t_c^*}$ we can solve for $t_c^*=\frac{1}{\lambda}\ln\frac{A-v_c^*}{A-v_a}$. Substituting this into the integral result gives
\begin{equation}
\frac{A}{\lambda}\ln\frac{A-v_c^*}{A-v_a} - \frac{v_a-v_c^*}{\lambda}.\label{eq:ELintegral}
\end{equation}
Finally, we can add the distance covered decelerating from $v_c^*$ to stationary at $\beta$, $\frac{(v_c^*)^2}{2\beta}$, and set all $v_a=\vmax$ to obtain a third boundary expression.
\[
d = \frac{\vmax^2-v_0^2}{2\alpha} + \frac{A}{\lambda}\ln\frac{A-v_c^*}{A-\vmax} - \frac{\vmax-v_c^*}{\lambda} + \frac{(v_c^*)^2}{2\beta}
\]
One final boundary expression can be obtained by considering the trajectory for which the vehicle travels down an E--L curve immediately from $v_0$, and then after $v_c^*$ decelerates at a rate of $\beta$ until stationary. Between this curve and the $d = \frac{v_0^2}{2\beta}$ boundary we encounter $\beta \leadsto EL \leadsto \beta$ trajectories and below this curve we encounter the $\alpha \leadsto EL \leadsto \beta$ which we have already explored. By combining the integral of an E--L curve from $v_c^*$ to $v_0$ and $\beta$ deceleration $\frac{(v_c^*)^2}{2\beta}$ we obtain
\[
d =  \frac{A}{\lambda}\ln\frac{A-v_c^*}{A-v_0} - \frac{v_0-v_c^*}{\lambda} + \frac{(v_c^*)^2}{2\beta}.
\]
\begin{condpics}
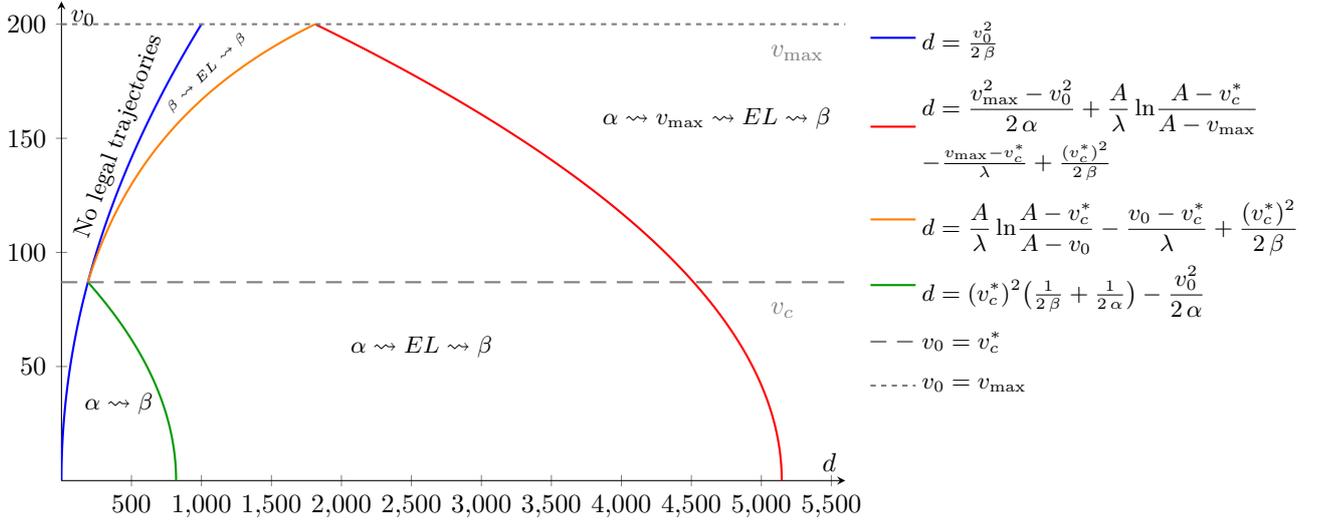
\begin{figure}[ht]
\centering
 \begin{tikzpicture}[scale=0.99]
  \pgfmathsetmacro{\vmax}{200}                   
  \pgfmathsetmacro{\alphaparam}{6}               
  \pgfmathsetmacro{\betaparam}{20}               
  \pgfmathsetmacro{\lambdaparam}{0.1}            
  \pgfmathsetmacro{\vc}{86.94452}                
  \pgfmathsetmacro{\A}{\vmax + \alphaparam/\lambdaparam}

  \begin{axis}[
    width=12cm, height=8cm,
    xlabel={$d$}, ylabel={$v_0$},
    xmin=0, xmax=5600, ymin=0, ymax=210,
    axis lines=middle,
    legend style={
      at={(1.02,0.98)}, anchor=north west,
      font=\small, draw=none,
      row sep=6pt
    },
    legend cell align=left,
  ]

    \addplot[thick,blue,domain=0:\vmax,samples=200]
      ({ x^2/(2*\betaparam) },{ x });
    \addlegendentry{$d=\tfrac{v_0^2}{2\,\beta}$}

    \addplot[thick,red,domain=0:\vmax,samples=200]
      ({ ( \vmax^2 - x^2 )/(2*\alphaparam)
         + \A/\lambdaparam*ln((\A-\vc)/(\A-\vmax))
         - (\vmax-\vc)/\lambdaparam
         + \vc^2/(2*\betaparam)
      },{ x });
    \addlegendentry{%
      \shortstack[l]{%
        $d=\displaystyle\frac{v_{\max}^2 - v_0^2}{2\,\alpha}
          + \frac{A}{\lambda}\ln\!\frac{A-v_c^*}{A-v_{\max}}$\\
        $-\frac{v_{\max}-v_c^*}{\lambda}
          + \frac{(v_c^*)^2}{2\,\beta}$%
      }%
    }

    \addplot[thick,orange,domain=\vc:\vmax,samples=200]
      ({ \A/\lambdaparam*ln((\A-\vc)/(\A-x))
         - (x-\vc)/\lambdaparam
         + \vc^2/(2*\betaparam)
      },{ x });
    \addlegendentry{%
      $d=\displaystyle\frac{A}{\lambda}\ln\!\frac{A-v_c^*}{A-v_0}
        - \frac{v_0-v_c^*}{\lambda}
        + \frac{(v_c^*)^2}{2\,\beta}$%
    }

    \addplot[thick,green!60!black,domain=0:\vc,samples=200]
      ({ \vc^2*(1/(2*\betaparam)+1/(2*\alphaparam))
         - x^2/(2*\alphaparam) },{ x });
    \addlegendentry{%
      $d=\displaystyle (v_c^*)^2\bigl(\tfrac1{2\,\beta}+\tfrac1{2\,\alpha}\bigr)
        - \frac{v_0^2}{2\,\alpha}$%
    }

    \addplot[
      thick, gray,
      dash pattern=on 6pt off 4pt
    ] coordinates {(0,\vc) (5600,\vc)};
    \addlegendentry{$v_0 = v_c^*$}

    \addplot[
      thick, gray,
      dash pattern=on 2pt off 2pt
    ] coordinates {(0,\vmax) (5600,\vmax)};
    \addlegendentry{$v_0 = v_{\max}$}

    \node[gray, anchor=north west] 
      at (axis cs:5000,{\vc-5}) {$v_c$};
    \node[gray, anchor=north west] 
      at (axis cs:5000,{\vmax-5}) {$v_{\max}$};

    \node[
      font=\small\bfseries,
      anchor=south west
    ] at (axis cs:100,25) {$\alpha \leadsto \beta$};

    \node[
      font=\tiny\bfseries,
      anchor=south west,
      rotate=45
    ] at (axis cs:800,155) {$\beta \leadsto EL \leadsto \beta$};

    \node[
      font=\small\bfseries,
      anchor=south west
    ] at (axis cs:2000,50) {$\alpha \leadsto EL \leadsto \beta$};

    \node[
      font=\small\bfseries,
      anchor=south west
    ] at (axis cs:3800,150) {$\alpha \leadsto v_{\mathrm{max}} \leadsto EL \leadsto \beta$};

    \node[
      font=\small,
      anchor=south west,
      rotate=70
    ] at (axis cs:260,100) {No legal trajectories};

  \end{axis}
\end{tikzpicture}
\caption{Phase-space diagram of $v_0$ vs.\ $d$, with $v_c^*<\vmax$ and $v_\beta>0$, and with parameters
$\lambda=0.1$, $\alpha=6$, $\beta=20$, $v_c^*\approx86.94$, and $v_{\max}=200$. Units do not reflect realistic physical values.}
\end{figure}
\end{condpics}

We may now also explore a situation in which $v_c^*>\vmax$ and $v_\beta>0$. Now that $v_c^*$ is outside of the legal range of velocities we can never have Euler--Lagrange in an optimal trajectory. Thus the problem simplifies greatly. $d\geq\frac{v_0^2}{2\beta}$ is still a necessary condition to not cross the traffic light. We can only follow $\alpha \leadsto \beta$ since there is no E--L and once $d$ is significantly large we have $\alpha \leadsto \vmax \leadsto \beta$. The boundary for this can be computed as the distance from $v_0$ to $\vmax$ at $\alpha$ acceleration, $\frac{\vmax^2-v_0^2}{2\alpha}$ added to the distance from $\vmax$ to stationary at $\beta$ deceleration, $\frac{\vmax^2}{2\beta}$. This results in the expression 
\[
d = \vmax^2\left(\frac{1}{2\alpha}+\frac{1}{2\beta}\right)-\frac{v_0^2}{2\alpha}.
\]

\begin{condpics}
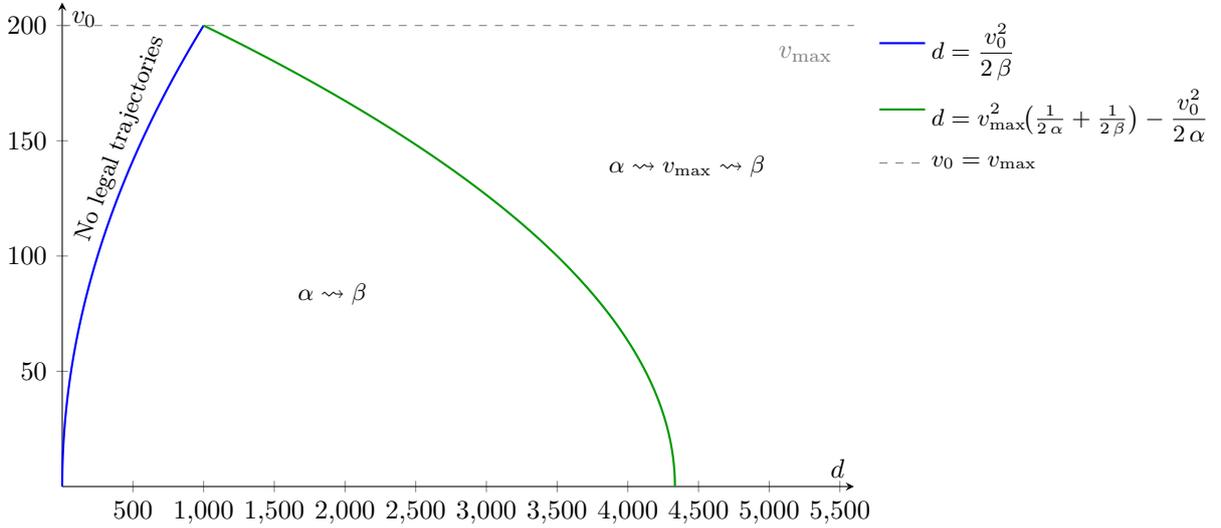
\begin{figure}[ht]
\centering
\begin{tikzpicture}
  \pgfmathsetmacro{\vmax}{200}                   
  \pgfmathsetmacro{\alphaparam}{6}               
  \pgfmathsetmacro{\betaparam}{20}               

  \begin{axis}[
    width=12cm, height=8cm,
    xlabel={$d$}, ylabel={$v_0$},
    xmin=0, xmax=5600, ymin=0, ymax=210,
    axis lines=middle,
    legend style={
      at={(1.02,0.98)}, anchor=north west,
      font=\small, draw=none,
      row sep=6pt      
    },
    legend cell align=left,
  ]

    \addplot[thick,blue,domain=0:\vmax,samples=200]
      ({ x^2/(2*\betaparam) },{ x });
    \addlegendentry{$d=\displaystyle\frac{v_0^2}{2\,\beta}$}

    \addplot[thick,green!60!black,domain=0:\vmax,samples=200]
      ({ \vmax^2*(1/(2*\alphaparam)+1/(2*\betaparam)) 
         - x^2/(2*\alphaparam) },
       { x });
    \addlegendentry{%
      $d=\displaystyle
        v_{\max}^2\!\bigl(\tfrac1{2\,\alpha}+\tfrac1{2\,\beta}\bigr)
        - \frac{v_0^2}{2\,\alpha}$%
    }

    \addplot[dashed,gray] coordinates {(0,\vmax) (5600,\vmax)};
    \addlegendentry{$v_0 = v_{\max}$}

    \node[gray, anchor=north west]
      at (axis cs:5000,{\vmax-5}) {$v_{\max}$};

    \node[
      font=\small\bfseries,
      anchor=south west
    ] at (axis cs:1600,75) {$\alpha \leadsto \beta$};

    \node[
      font=\small,
      anchor=south west,
      rotate=70
    ] at (axis cs:260,100) {No legal trajectories};

    \node[
      font=\small\bfseries,
      anchor=south west
    ] at (axis cs:3800,130) {$\alpha \leadsto v_{\mathrm{max}} \leadsto \beta$};

  \end{axis}
\end{tikzpicture}
\caption{Phase-space diagram of $v_0$ vs.\ $d$, with $v_c^*>\vmax$ and $v_\beta>0$,
and with parameters $\alpha=6$, $\beta=20$, and $v_{\max}=200$. Units are not from any standard unit system in which these are realistic physical values.}
\end{figure}
\end{condpics}

Finally, when $v_\beta \leq 0$ we do not have to worry about the value of $v_c^*$, since we never change from E--L to \(\be\). We still have the $\beta$ stopping boundary of $d\geq \frac{v_0^2}{2\beta}$. Our first boundary comes from the trajectory of travelling purely along an Euler--Lagrange curve from $v_0$ to stationary. Since no boundary conditions interfere this must be the unique optimal trajectory. Above this curve we will see $\beta \leadsto EL$ trajectories and below it $\alpha \leadsto EL$. This boundary can by derived as before in \eqref{eq:ELintegral} by integrating the E--L curve as it travels from $\vmax$ to $0$ to obtain

\[
d =\frac{A}{\lambda}\ln\frac{A}{A-v_0} - \frac{v_0}{\lambda}.
\]

Our final boundary condition comes from trajectories that accelerate to $\vmax$ at $\alpha$ and then after some time follow an E--L curve to stationary. The boundary is given by the distance covered accelerating to $\vmax$ from $v_0$ and then instantly decelerating down an E--L curve. This is given by 

\[
d = \frac{\vmax^2-v_0^2}{2\alpha} + \frac{A}{\lambda}\ln\frac{A}{A-\vmax} - \frac{\vmax}{\lambda}.
\]

This concludes all possible cases of optimal trajectory for the Exponential case. 

\begin{condpics}
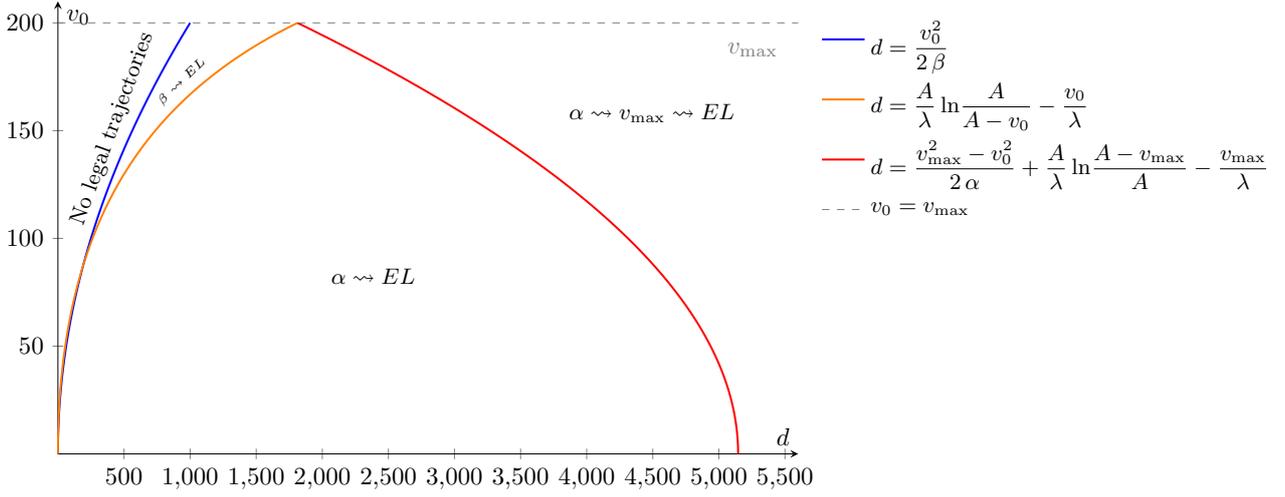
\begin{figure}[ht]
\centering
\resizebox{\linewidth}{!}{%
\begin{tikzpicture}
  \pgfmathsetmacro{\vmax}{200}                   
  \pgfmathsetmacro{\alphaparam}{6}               
  \pgfmathsetmacro{\betaparam}{20}               
  \pgfmathsetmacro{\lambdaparam}{0.1}            
  \pgfmathsetmacro{\A}{\vmax + \alphaparam/\lambdaparam}

  \begin{axis}[
    width=12cm, height=8cm,
    xlabel={$d$}, ylabel={$v_0$},
    xmin=0, xmax=5600, ymin=0, ymax=210,
    axis lines=middle,
    legend style={
      at={(1.02,0.98)}, anchor=north west,
      font=\small, draw=none,
      row sep=6pt  
    },
    legend cell align=left,
  ]

    \addplot[thick,blue,domain=0:\vmax,samples=200]
      ({ x^2/(2*\betaparam) },{ x });
    \addlegendentry{$d=\displaystyle\frac{v_0^2}{2\,\beta}$}

    \addplot[thick,orange,domain=0:\vmax,samples=200]
      ({ \A/\lambdaparam*ln(\A/(\A - x)) - x/\lambdaparam },{ x });
    \addlegendentry{%
      $d=\displaystyle\frac{A}{\lambda}\ln\!\frac{A}{A - v_0}
        - \frac{v_0}{\lambda}$%
    }

    \addplot[thick,red,domain=0:\vmax,samples=200]
      ({ (\vmax^2 - x^2)/(2*\alphaparam)
         + \A/\lambdaparam*ln(\A/(\A - \vmax))
         - \vmax/\lambdaparam
      },{ x });
    \addlegendentry{%
      $d=\displaystyle\frac{v_{\max}^2 - v_0^2}{2\,\alpha}
        + \frac{A}{\lambda}\ln\!\frac{A - v_{\max}}{A}
        - \frac{v_{\max}}{\lambda}$%
    }

    \addplot[dashed,gray] coordinates {(0,\vmax) (5600,\vmax)};
    \addlegendentry{$v_0 = v_{\max}$}

    \node[gray, anchor=north west]
      at (axis cs:5000,{\vmax-5}) {$v_{\max}$};

    \node[
      font=\tiny\bfseries,
      anchor=south west,
      rotate=45
    ] at (axis cs:800,155) {$\beta \leadsto EL$};

    \node[
      font=\small\bfseries,
      anchor=south west
    ] at (axis cs:2000,75) {$\alpha \leadsto EL$};

    \node[
      font=\small\bfseries,
      anchor=south west
    ] at (axis cs:3800,150) {$\alpha \leadsto v_{\mathrm{max}} \leadsto EL$};

    \node[
      font=\small,
      anchor=south west,
      rotate=70
    ] at (axis cs:260,100) {No legal trajectories};

  \end{axis}
\end{tikzpicture}
}
\caption{Phase-space diagram of $v_0$ vs.\ $d$, with $v_\beta\le0$,
and parameters $\alpha=6$, $\beta=20$, $\lambda=0.1$, and $v_{\max}=200$.}
\end{figure}
\end{condpics}

\section{Discussion}

We proved the existence of an optimal trajectory (of a car) with fixed initial location and velocity values, subject to constraints on its deceleration, acceleration and velocity. The objective function that we minimised was the expected arrival time to a distant destination, when an obstacle (the red light) at a certain distance is present until a random time with a known distribution. We formulated this objective function as a Lagrangian action integral, then showed that every minimising trajectory corresponds to a level of a pressure function that is associated to the Lagrangian, and moreover has a simply-described structure wherever it is above or below that value of the pressure. Our results are valid under natural assumptions on the distribution of the time at which the light will turn green. Using our general results, we worked out full details of the optimal path for two particular families of distribution: Uniform and Exponential.

Although the detailed description of the minimiser involved many cases to cover the full parameter space, we provided a rather intuitive tank-and-water picture which explains all possible cases. Our proofs used this interpretation, which is equivalent to the optimisation problem that we started with. This equivalence connects the natural Euler-Lagrange curves of the problem to level sets of the water surface.

The Exponential case revealed an unexpected feature of the optimal paths: when the Euler-Lagrange curves turn steeper than our constraint for maximum deceleration and, as a consequence, the trajectory ends with a phase of full deceleration, this switch happens well before the Euler-Lagrange reaches this maximum allowed slope. Instead, this last phase starts suddenly with a jump in deceleration (that is, slamming the brakes at a certain velocity). The phenomenon is easily understood via the tank-and-water interpretation; the full deceleration phase at the end plays the role of a moveable wall in the inhomogeneous pressure landscape, and it takes a position of a mechanical equilibrium to equalise the hydrostatic forces acting on it.

The methods of this paper seem reasonably robust and we expect them to be suitable for further generalisations. This could involve minimising a combination of energy and arrival time, or considering further classes of distributions for the duration of the red phase of the traffic light.

\section*{Acknowledgements}
We thank Gyula D\'avid for illuminating discussions on reparametrising action integrals. E.\ C.\ and A.\ T.\ were supported by the Heilbronn Institute for Mathematical Research. M.\ B.\ was partially supported by the EPSRC EP/W032112/1 Standard Grant of the UK. This study did not involve any underlying data.

\end{document}